\newtheoremstyle{fancy}{}{}{\itshape}{}{\textsc\bgroup}{.\egroup}{ }{}
\newtheoremstyle{fanci}{}{}{\rm}{}{\textsc\bgroup}{.\egroup}{ }{}
\theoremstyle{fancy}
\newcounter{intro}
\numberwithin{equation}{section}    \swapnumbers
\newtheorem{cor}[equation]{Corollary}
\newtheorem{lem}[equation]{Lemma}
\newtheorem{prop}[equation]{Proposition}
\newtheorem{thm}[equation]{Theorem}
\newtheorem{mt}[equation]{Main Theorem}
\newtheorem{question}[equation]{Question}
\theoremstyle{fanci}
\newtheorem{dfn}[equation]{Definition}
\newtheorem{exa}[equation]{Example}
\newtheorem{rem}[equation]{Remark}
\newcommand{\cref}[1]{Corollary~\ref{#1}}
\def\R{\mathbb{R}}    
\def\Z{{\mathbb{Z}}}  
\def\C{{\mathbb{C}}}
\def\Q{{\mathbb{Q}}}
\def\ot{\otimes}
\def\Ga{\Gamma}
\def\bs{{\backslash}}
\def\ms{\mathscr}
\def\Id{\operatorname{Id}}
\def\Res{\operatorname{Res}}
\def\Ind{\operatorname{Ind}}
\def\Hom{\operatorname{Hom}}
\def\End{\operatorname{End}}
\def\ker{\operatorname{ker}}
\def\im{\operatorname{im}}
\def\Isom{\operatorname{Isom}}
\def\ol{\overline}
\providecommand{\abs}[1]{\left\vert#1\right\vert}
\providecommand{\bmat}[1]{\begin{bmatrix}#1\end{bmatrix}}
\providecommand{\Dbd}[1]{\frac{\partial}{\partial#1}}
\providecommand{\bmat}[1]{\begin{bmatrix}#1\end{bmatrix}}
\begin{document}

\begin{title}[Isogeny of Intermediate Jacobians]{Transplantation and isogeny of intermediate Jacobians of compact K\"ahler manifolds}\end{title}

\author{Carolyn Gordon}
\address[Gordon]{Department of Mathematics, Dartmouth College, Hanover, 
New Hampshire 03755, U. S. A.}
\email{csgordon@dartmouth.edu}

\author{Eran Makover}
\address[Makover]{Department of  Mathematical sciences, Central Connecticut State University, 1615 Stanley St., New Britain, Connecticut 06050, U. S. A.}
\email{makovere@mail.ccsu.edu}

\author{Bjoern Muetzel}
\address[Muetzel]
{Department of Mathematics, Dartmouth College, Hanover, 
New Hampshire 03755, U. S. A.}
\email{bjorn.mutzel@gmail.com}

\author{David Webb}
\address[Webb]{Department of Mathematics, Dartmouth College, Hanover, 
New Hampshire 03755, U. S. A.}
\email{david.l.webb@dartmouth.edu}



\subjclass{Primary 14K02; Secondary 14K30, 32J27, 32Q15, 53C20, 14C30, 58G25}


\begin{abstract}
We give a general method for constructing compact K\"ahler manifolds $X_1$ and $X_2$ whose intermediate Jacobians $J^k(X_1)$ and $J^k(X_2)$ are isogenous for each $k$, and we exhibit some examples.  The method is based upon the algebraic transplantation formalism arising from Sunada's technique for constructing pairs of compact Riemannian manifolds whose Laplace spectra are the same.  We also show that the method produces compact Riemannian manifolds whose Lazzeri Jacobians are isogenous.
\end{abstract}

\maketitle

\tableofcontents

\section{Introduction} 

One of the classical invariants of a compact Riemann surface $X$ of genus $g$ is its Jacobian $J(X)$, a complex torus of complex dimension $g$ given as the quotient of the vector space of complex linear functionals on the space of holomorphic $1$-forms by the lattice of periods, those linear functionals arising from integration over $1$-cycles.  More generally, let $X$ be a compact, complex manifold of complex dimension $d$.  We say $X$ is of \emph{K\"ahler type} if it admits a K\"ahler structure.  In this case, for each odd integer $k$ in the range $1\le k\le d$, one associates to $X$ a complex torus called the $k$th Griffiths \emph{intermediate Jacobian} $J_G^k(X)$.  For $k=1$, it is the classical Picard variety, while for $k=d$, it is the Albanese variety.  (If $d=1$, then the Picard and Albanese varieties are both just the Jacobian of the Riemann surface $X$.)  One can also associate to a compact K\"ahler manifold $X$ its $k$th Weil intermediate Jacobian $J_W^k(X)$; it is the same topological torus as $J_G^k(X)$, but endowed with a different complex structure.  While the existence of a K\"ahler structure is necessary for their existence, the intermediate Jacobians are independent of the choice of K\"ahler structure; they depend only on the complex structure.

Our main result is to give a technique, with many examples, for constructing pairs $X_1$ and $X_2$ of compact complex manifolds of K\"ahler type whose Griffiths intermediate Jacobians $J_G^k(X_1)$ and $J_G^k(X_2)$ are isogenous for each $k$; the Weil intermediate Jacobians $J_W^k(X_1)$ and $J_W^k(X_2)$ are also isogenous.   Two subgroups $\Ga_1$ and $\Ga_2$ of a finite group $G$ are said to be \emph{almost conjugate} if every $G$-conjugacy class intersects $\Ga_1$ and $\Ga_2$ in the same number of elements; thus there is a set bijection $\Ga_1\to\Ga_2$ that carries each element of $\Ga_1$ to an element of $\Ga_2$ conjugate to it in $G$.  

\begin{mt}\label{thm.mt}
Let $X$ be a compact complex manifold of K\"ahler type and let $G$ be a finite group of biholomorphic transformations of $X$.  Suppose that $\Ga_1$ and $\Ga_2$ are  almost conjugate subgroups of $G$ that act freely on $X$, and let $X_i=\Ga_i\bs X$ for $i=1,2$.  Then for $1\le k\le d$, the Griffiths intermediate Jacobians $J_G^k(X_1)$ and $J_G^k(X_2)$ are isogenous.  The Weil intermediate Jacobians $J_W^k(X_1)$ and $J_W^k(X_2)$ are also isogenous.
\end{mt}

If the subgroups $\Ga_1$ and $\Ga_2$ are actually conjugate in $G$, then $X_1$ and $X_2$ will be biholomorphically equivalent, and the statement of the theorem becomes trivial.  When the subgroups are only almost conjugate, the quotients $X_1$ and $X_2$ are not \emph{a priori} biholomorphically equivalent, but the possibility of an ``accidental'' biholomorphic equivalence must be ruled out.  Of course, if the subgroups $\Ga_1$ and  $\Ga_2$ are non-isomorphic, then $X_1$ and $X_2$ have different fundamental groups, so the construction is guaranteed to yield non-trivial examples in that case.

Theorem~\ref{thm.mt} is an adaptation to the setting of K\"ahler manifolds and their Jacobians of a technique of T. Sunada \cite{Sunada} originally developed for the construction of compact Riemannian manifolds with isospectral Laplacians.   Sunada's theorem (inspired by Gassmann's \cite{Gassmann} construction of a pair of nonisomorphic algebraic number fields having the same Dedekind zeta function and hence the same arithmetic) is given as follows:

\begin{thm}(T. Sunada)\label{thm.sun}
Let $(M,g)$ be a compact Riemannian manifold, and let $G$ be a finite subgroup of the isometry group $\Isom(M,g)$.  Suppose that $\Ga_1$ and $\Ga_2$ are  almost conjugate subgroups of $G$ that act freely on $M$.  Then the orbit spaces $\Ga_1\backslash M$ and $\Ga_2\backslash M$ (which are normally covered by $M$ and hence inherit natural Riemannian metrics from $M$) are isospectral, i.e., the associated Laplace-Beltrami operators have the same eigenvalue spectrum.
\end{thm}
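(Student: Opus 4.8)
The plan is to deduce this classical theorem of Sunada by reducing the geometric assertion of isospectrality to a representation-theoretic identity, using the heat kernel of $M$ as a bridge.

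First I would translate the almost-conjugacy hypothesis into the statement that the permutation $G$-modules $\C[\Gamma_1\bs G]$ and $\C[\Gamma_2\bs G]$ --- equivalently, the induced representations $\Ind_{\Gamma_1}^G\mathbf 1$ and $\Ind_{\Gamma_2}^G\mathbf 1$ of the trivial representation --- are isomorphic over $\C$. To see this, one computes the character of $\Ind_{\Gamma_i}^G\mathbf 1$ at $\sigma\in G$: it equals $|C_G(\sigma)|\cdot|\Gamma_i\cap c(\sigma)|/|\Gamma_i|$, where $c(\sigma)$ is the $G$-conjugacy class of $\sigma$ and $C_G(\sigma)$ its centralizer. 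By hypothesis $|\Gamma_1\cap c(\sigma)|=|\Gamma_2\cap c(\sigma)|$ for all $\sigma$, and (taking $\sigma=e$) $|\Gamma_1|=|\Gamma_2|$; hence the two characters agree, so the representations are isomorphic.

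Next I would bring in the heat kernel $k(t,x,y)$ of $(M,g)$. Since $G\subseteq\Isom(M,g)$ it is $G$-invariant, $k(t,\sigma x,\sigma y)=k(t,x,y)$. Because each $\Gamma_i$ acts freely by isometries, $M\to M_i:=\Gamma_i\bs M$ is a Riemannian covering, and the heat kernel of $M_i$ is given by the method of images, $k_i(t,\Gamma_i x,\Gamma_i y)=\sum_{\gamma\in\Gamma_i}k(t,x,\gamma y)$. Integrating over the diagonal of $M_i$ and lifting the integral to $M$ (which introduces the factor $|\Gamma_i|^{-1}$ from the covering degree) gives
\[
\operatorname{Tr}(e^{-t\Delta_{M_i}})=\int_{M_i}k_i(t,p,p)\,dp=\frac{1}{|\Gamma_i|}\sum_{\gamma\in\Gamma_i}F_t(\gamma),\qquad F_t(\sigma):=\int_M k(t,x,\sigma x)\,dx .
\]
The function $F_t\colon G\to\R$ is a class function: substituting $x\mapsto\tau x$ and using the $G$-invariance of $k$ shows $F_t(\tau\sigma\tau^{-1})=F_t(\sigma)$ for all $\tau,\sigma\in G$. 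Finally I would combine the two ingredients: almost conjugacy provides a bijection $\Gamma_1\to\Gamma_2$ sending each element to a $G$-conjugate of itself, together with $|\Gamma_1|=|\Gamma_2|$, so since $F_t$ is constant on $G$-conjugacy classes one gets $\sum_{\gamma\in\Gamma_1}F_t(\gamma)=\sum_{\gamma\in\Gamma_2}F_t(\gamma)$, hence $\operatorname{Tr}(e^{-t\Delta_{M_1}})=\operatorname{Tr}(e^{-t\Delta_{M_2}})$ for every $t>0$. Writing each trace as $\sum_j e^{-t\lambda_j}$ over the respective Laplace spectrum, equality of these functions of $t$ forces the two spectra, counted with multiplicity, to coincide.

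Alternatively, and in closer keeping with the transplantation formalism used elsewhere in this paper, one can prove isospectrality by exhibiting an explicit isometry $L^2(M_1)\to L^2(M_2)$ intertwining the two Laplacians: identifying $L^2(M_i)=L^2(M)^{\Gamma_i}\cong\Hom_G(\C[\Gamma_i\bs G],L^2(M))$ by Frobenius reciprocity, precomposition with a fixed $G$-isomorphism $\C[\Gamma_1\bs G]\to\C[\Gamma_2\bs G]$ yields such a map, and it commutes with $\Delta_M$ because $\Delta_M$ is $G$-equivariant. In either approach the one substantive point is the passage from the combinatorial almost-conjugacy condition to the isomorphism of permutation $G$-modules in the first step; the remaining work is technical --- the method-of-images formula for the heat kernel on a finite Riemannian covering, and the standard fact that the heat trace determines the spectrum.
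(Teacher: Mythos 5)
Your proof is correct, but note that the paper itself offers no proof of this statement: Theorem~\ref{thm.sun} is quoted as a known result of Sunada \cite{Sunada}, and the paper only remarks that proofs exist both via the almost-conjugacy condition directly and via representation equivalence. Your main argument -- character computation showing $\C[\Gamma_1\bs G]\cong\C[\Gamma_2\bs G]$, the method-of-images heat kernel on a finite Riemannian covering, the class function $F_t(\sigma)=\int_M k(t,x,\sigma x)\,dx$, and equality of heat traces forcing equality of spectra -- is essentially the classical heat-trace proof (Sunada/Buser/Brooks) using almost conjugacy directly, and it is complete. Your alternative sketch is the route closest in spirit to this paper: it is exactly the transplantation-of-invariants argument the authors use for Theorem~\ref{thm.pq'} (via Theorem~\ref{transinv} and Proposition~\ref{compatinv}), where a $G$-isomorphism $\C[\Gamma_1\bs G]\to\C[\Gamma_2\bs G]$ induces a map $L^2(M)^{\Gamma_2}\to L^2(M)^{\Gamma_1}$ commuting with the $G$-equivariant Laplacian. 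Two small touch-ups: the equality $|\Gamma_1|=|\Gamma_2|$ does not come from ``taking $\sigma=e$'' (each intersection with the class of $e$ is a single element); it comes from summing the almost-conjugacy counts over all conjugacy classes, equivalently from the asserted bijection $\Gamma_1\to\Gamma_2$. And in the alternative argument the transplantation map induced by an arbitrary $G$-isomorphism is an isomorphism intertwining the Laplacians but not automatically an isometry; either choose the intertwiner unitary (possible since equivalent unitary representations of a finite group are unitarily equivalent) or simply observe that any invertible intertwiner maps $\lambda$-eigenspaces isomorphically onto $\lambda$-eigenspaces, which already gives equality of spectra with multiplicity.
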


The hypothesis of Sunada's Theorem further guarantee that the manifolds are \emph{strongly isospectral}, i.e., not only the Laplace-Beltrami operators but all natural elliptic operators on the two manifolds are isospectral.  For example, the Hodge Laplacians on $p$-forms on $\Ga_1\backslash M$ and $\Ga_2\backslash M$ are isospectral for each $p$.

In Sunada's theorem, one assumes that the action of $G$ on $X$ preserves the Riemannian metric, and one obtains isospectrality of all operators naturally associated with the metric.  In Theorem~\ref{thm.mt}, we instead assume that $G$ preserves the complex structure in order to relate the Jacobians, which are an invariant of the complex structure.   If we endow $X$ with a K\"ahler structure and assume that $G$ preserves \emph{both} both the metric and the complex structure, then in addition to the conclusions of Theorems~\ref{thm.mt} and \ref{thm.sun}, we further obtain isospectrality of all elliptic operators naturally associated with K\"ahler structures.   In particular, on a K\"ahler manifold $X$, the Hodge Laplacian on the space $\ms{A}^k(X)$ of smooth $k$-forms preserves the subspace $\ms{A}^{p,q}(X)$ of forms of type $(p,q)$ for each pair of non-negative integers $(p,q)$ with $p+q=k$.   We show:

\begin{thm}\label{thm.pq} Let $X$ be a compact K\"ahler manifold of real dimension $n$ and let $G$ be a finite group of holomorphic isometries of
$X$.  Suppose that $\Ga_1$ and $\Ga_2$ are  almost conjugate subgroups of $G$ that act freely on $X$ and let $X_i=\Ga_i\bs X$, $i=1,2$.  Then for each pair of non-negative integers $(p,q)$ with $p+q\leq n$, the Hodge Laplacians on $\ms{A}^{p,q}(\Ga_1\bs X)$ and $\ms{A}^{p,q}(\Ga_2\bs X)$ are isospectral.
\end{thm}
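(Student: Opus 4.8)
The plan is to deduce Theorem~\ref{thm.pq} from Sunada's strong isospectrality (Theorem~\ref{thm.sun}) by exploiting the compatibility of the $G$-action with the K\"ahler structure. First I would observe that since $G$ consists of holomorphic isometries of $X$, the complex structure $J$ and the Riemannian metric $g$ are both $G$-invariant, hence descend to $X_1$ and $X_2$; moreover the covering maps $X\to X_i$ are holomorphic Riemannian coverings, so the induced K\"ahler structures on $X_1$ and $X_2$ are precisely the ones pulled back from $X$. Consequently the Hodge star, the operators $\partial$, $\bar\partial$, $\partial^*$, $\bar\partial^*$, and the type decomposition $\ms{A}^k = \bigoplus_{p+q=k}\ms{A}^{p,q}$ all commute with pullback along the coverings and are preserved by the $G$-action on $\ms{A}^\bullet(X)$. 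In particular the Hodge Laplacian $\Delta$ on $\ms{A}^k(X)$ restricts to each $\ms{A}^{p,q}(X)$, and these restrictions are $G$-equivariant elliptic operators.

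Next I would invoke the transplantation/representation-theoretic mechanism underlying Sunada's theorem in the form most convenient here: for any $G$-equivariant elliptic operator $D$ on $X$ and any almost conjugate pair $\Ga_1,\Ga_2$ acting freely, the $\lambda$-eigenspace of the induced operator on $X_i$ is canonically isomorphic to the $\Ga_i$-invariant part of the $\lambda$-eigenspace $E_\lambda(D)\subset\ms{A}^\bullet(X)$, i.e. to $\Hom_{\Ga_i}(\mathbbm{1}, E_\lambda(D)) = \Hom_G(\Ind_{\Ga_i}^G\mathbbm{1}, E_\lambda(D))$. Almost conjugacy of $\Ga_1$ and $\Ga_2$ is exactly the statement that $\Ind_{\Ga_1}^G\mathbbm{1}$ and $\Ind_{\Ga_2}^G\mathbbm{1}$ have the same character, hence are isomorphic $G$-representations; therefore $\dim E_\lambda^{\Ga_1}(D)=\dim E_\lambda^{\Ga_2}(D)$ for every $\lambda$, which is precisely isospectrality of the descended operator on $X_1$ and $X_2$. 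Applying this with $D$ taken to be the Hodge Laplacian restricted to $\ms{A}^{p,q}(X)$ — which makes sense because, as noted, $\Delta$ preserves $\ms{A}^{p,q}(X)$ and $G$ preserves both $\Delta$ and the type decomposition — yields that the Hodge Laplacians on $\ms{A}^{p,q}(X_1)$ and $\ms{A}^{p,q}(X_2)$ are isospectral, which is the assertion of the theorem.

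The one point requiring care, and which I expect to be the main (though modest) obstacle, is the interchange of the two decompositions of $\ms{A}^\bullet(X)$: the spectral decomposition $\bigoplus_\lambda E_\lambda(\Delta)$ and the Hodge type decomposition $\bigoplus_{p,q}\ms{A}^{p,q}(X)$. One must check that these are compatible, so that $E_\lambda(\Delta)=\bigoplus_{p+q=k} E_\lambda(\Delta|_{\ms{A}^{p,q}})$ and that each summand is a $G$-subrepresentation; this is immediate once one knows $[\Delta, \Pi_{p,q}]=0$ where $\Pi_{p,q}$ is the projection onto $\ms{A}^{p,q}$, which is the K\"ahler identity $\Delta = 2\Delta_{\bar\partial}$ together with the fact that $\Delta_{\bar\partial}$ is bihomogeneous of bidegree $(0,0)$. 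Granting this, the argument is purely formal. I would present the proof by first recording the K\"ahler preliminaries and the $G$-equivariance of $\Delta|_{\ms{A}^{p,q}}$, then citing the eigenspace-as-invariants identification from the transplantation formalism developed earlier in the paper, and finally applying it summand by summand; no genuinely new analytic input beyond Theorem~\ref{thm.sun} and standard Hodge theory on compact K\"ahler manifolds is needed.
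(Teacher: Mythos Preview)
Your proposal is correct and follows essentially the same approach as the paper: both arguments rest on the observation that the Hodge Laplacian restricts to a $G$-equivariant operator on each $\ms{A}^{p,q}(X)$ (because $G$ acts by holomorphic isometries), and then invoke the transplantation/representation-equivalence machinery. The only cosmetic difference is that the paper applies the explicit transplantation map $\tau^{\sharp}:\ms{A}^{p,q}(X_2)\to\ms{A}^{p,q}(X_1)$ and notes (via Proposition~\ref{compatinv}) that it intertwines the Laplacians, whereas you phrase the same conclusion as an eigenspace dimension count via Frobenius reciprocity.
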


We note that our results are not vacuous: a theorem of Serre \cite{Serre} shows that \emph{any} finite group $G$ is the fundamental group of a compact K\"ahler manifold (see also \cite{AmorosBurgerCorletteKotschickToledo}).  Thus one can easily use Sunada's construction to exhibit pairs of isospectral but nonisometric compact K\"ahler manifolds satisfying the hypotheses of Theorems ~\ref{thm.mt} and \ref{thm.pq}.  We list a few such examples in Section \ref{sec.examples}.

In the final section, we prove an analogous result concerning isogeny  of Lazzeri Jacobians.  F. Lazzeri (unpublished) associated a principally polarized abelian variety $J_L(M)$ to \emph{every} compact oriented Riemannian manifold $M$ whose dimension is twice an odd integer.   In particular, the definition of the Lazzeri Jacobian does not require that $M$ be of K\"ahler type, or even that it admit a complex structure.  E. Rubei \cite{Rubei} considered the Torelli and Schottky problems for the Lazzeri Jacobian.  In case $M$ is K\"ahler, she also compared it to the Griffiths and Weil intermediate Jacobians.   We prove:

\begin{thm}\label{thm.laz} Let $(M,g)$ be a compact orientable Riemannian manifold of dimension $2m$, where $m$ is an odd positive integer, and let $G$ be a finite group of orientation-preserving isometries of $(M,g)$.  Suppose that $\Ga_1$ and $\Ga_2$ are  almost conjugate subgroups of $G$ that act freely on $M$ (so the orbit manifolds $M_1=\Ga_1\backslash M$ and $M_2=\Ga_2\backslash M$ are isospectral by Sunada's Theorem).  Then the Lazzeri Jacobians $J_L(M_1)$ and $J_L(M_2)$ are isogenous.
\end{thm}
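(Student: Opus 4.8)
The plan is to mimic the proof strategy that (presumably) establishes Theorem~\ref{thm.mt}, using the Lazzeri Jacobian's explicit description in place of the intermediate Jacobians. Recall that the Lazzeri Jacobian $J_L(M)$ of an oriented Riemannian manifold of dimension $2m$ with $m$ odd is built from the real cohomology $H^m(M;\R)$ equipped with the cup-product/wedge pairing and the Hodge star: concretely, $J_L(M) = H^m(M;\R)/H^m(M;\Z)$ with complex structure given by the Hodge star $*$ (which satisfies $*^2 = (-1)^m = -1$ on middle-degree forms when $m$ is odd), and the principal polarization coming from the bilinear form $(\alpha,\beta)\mapsto \int_M \alpha\wedge\beta$ on $H^m$. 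So the data one must transplant is: the integral lattice $H^m(M_i;\Z)$, the complex structure coming from the Hodge star, and the intersection pairing.

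First I would set up the Sunada transplantation map. Since $\Ga_1$ and $\Ga_2$ are almost conjugate in $G$, the $G$-representations $\C[\Ga_1\bs G]$ and $\C[\Ga_2\bs G]$ are isomorphic; pick a $G$-module isomorphism and, by averaging, arrange it to be an isometry for the natural inner products. Applying the functor $\Hom_G(-, \cdot)$ (or, concretely, taking $\Ga_i$-invariants of pullback to $M$) produces a linear isomorphism $T\colon H^m(M_1;\R)\to H^m(M_2;\R)$. The key point, exactly as in Sunada's strong isospectrality argument, is that $T$ is realized by an honest correspondence of differential forms: $T = \sum_{j} (\phi_{\gamma_j})^* $ for suitable group elements and coefficients, so $T$ intertwines \emph{any} operator built naturally from the $G$-invariant Riemannian data on $M$. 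In particular $T$ commutes with the Hodge Laplacian, hence preserves harmonic forms, and it commutes with the Hodge star operator $*$, since $*$ is natural with respect to orientation-preserving isometries and $G$ acts by such. Therefore $T$ is complex-linear for the Lazzeri complex structures.

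Next I would check that $T$ respects the polarizations up to a scalar and carries the integral lattice into a finite-index subgroup of the target lattice. For the pairing: $\int_{M_2} T\alpha \wedge T\beta$ expands via the transplantation coefficients into a sum of terms $\int_M \phi^*\tilde\alpha \wedge \psi^*\tilde\beta$, and using that the $\phi,\psi$ are isometries of $M$ together with the almost-conjugacy bookkeeping (the same computation that shows $T$ is an isometry on $L^2$), one gets $\int_{M_2} T\alpha\wedge T\beta = c\int_{M_1}\alpha\wedge\beta$ for a positive constant $c$ depending only on $|\Ga_i|$; rescaling $T$ if desired makes it a similitude of the polarizations, which is all an isogeny of principally polarized abelian varieties requires. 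For the lattices, note that the transplantation map has rational (indeed, after clearing denominators, integral up to a bounded factor) coefficients, so $T$ maps $H^m(M_1;\Q)$ isomorphically onto $H^m(M_2;\Q)$; hence $T(H^m(M_1;\Z))$ is a full-rank lattice commensurable with $H^m(M_2;\Z)$. A complex-linear isomorphism of the ambient real tori carrying one lattice to a commensurable one descends to an isogeny $J_L(M_1)\to J_L(M_2)$, and compatibility with the polarizations up to scalar makes it an isogeny of polarized abelian varieties.

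The main obstacle I anticipate is verifying that the transplantation map genuinely commutes with the Hodge star (equivalently, that it preserves the Lazzeri complex structure), since $*$ depends on both the metric and the orientation: one must use that every element of $G$ is an orientation-preserving isometry so that $\phi^*$ commutes with $*$, and then that the transplantation map is assembled $G$-equivariantly from such pullbacks. A secondary technical point is passing cleanly between the de Rham picture (harmonic forms, where $*$ and the wedge pairing live) and the singular picture (where the integral lattice $H^m(M_i;\Z)$ lives), i.e.\ checking that the transplantation isomorphism on real cohomology is the same map whether one computes it via forms or via the group-theoretic action on singular cochains; this is the standard compatibility underlying Sunada-type "transplantation of cohomology'' arguments and should follow from naturality of the de Rham isomorphism. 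Once these are in hand, the conclusion that $J_L(M_1)$ and $J_L(M_2)$ are isogenous is immediate.
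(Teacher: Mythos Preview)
Your proposal is correct and follows essentially the same route as the paper: build a transplantation map from a $G$-intertwiner $\Z[\Ga_1\bs G]\to\Z[\Ga_2\bs G]$, observe that it commutes with the Hodge $*$ because $G$ acts by orientation-preserving isometries (hence is complex-linear for the Lazzeri complex structure), and then argue on the integral lattices to produce the isogeny. The paper makes your ``secondary technical point'' precise via its Pairing Lemma, which identifies the transplantation-of-invariants map on de~Rham cohomology with the dual $(\tau_{\sharp})^*$ of the transplantation-of-coinvariants map on singular homology, so that one literally has a map $H^m(M_2,\Z)\to H^m(M_1,\Z)$ sitting under $\tau^{\sharp}$ and the Snake Lemma finishes; this is cleaner than your commensurability phrasing (note that a linear isomorphism sending $L_1$ to a lattice merely commensurable with $L_2$ does not descend to the tori until you pass to an integer multiple, which is exactly what clearing denominators in $\tau$ accomplishes). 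Your discussion of the polarization is extra: the paper proves only isogeny of complex tori and does not treat the cup-product pairing.
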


Note that the hypotheses of this theorem are identical to those of Sunada's Theorem except for the additional assumption on the dimension of $M$ and the requirement that the elements of $G$ be orientation-preserving.

\subsection{Historical context}  Sunada's Theorem has been enormously influential in the construction of isospectral manifolds.  Isospectral manifolds constructed by this technique are often referred to as \emph{Sunada isospectral} manifolds.  Among the many examples of Sunada isospectral manifolds are Riemann surfaces of every genus $g\geq 4$; see the work of P. Buser \cite{Busertrans}, R. Brooks and R. Tse \cite{BrooksTse}, and others.  H. McKean \cite{McKean} showed that every collection of mutually isospectral Riemann surfaces is finite.  Buser \cite{Buser} gave the explicit upper bound $e^{720g^2}$ on the size of any collection of isospectral Riemann surfaces of genus $g$, and recently, H. Parlier \cite{Parlier} obtained the improved upper bound $g^{154g}$.  However, huge sets of mutually isospectral Riemann surfaces do indeed exist: R. Brooks, R. Gornet, and W. Gustafson \cite{BrooksGornetGustafson} used Sunada's technique to construct such sets of order $g^{c\log(g)}$.  More generally, given any compact locally symmetric space $X$ of noncompact type, D. B. McReynolds \cite{McReynolds} constructed arbitrarily large mutually Sunada isospectral compact locally symmetric spaces with universal covering $X$.   

Sunada's technique was the first general technique for constructing isospectral manifolds and, due to its simplicity and power, it remains the most widely used method.  For completeness, we note that an arithmetic method of constructing isospectral but nonisometric Riemann surfaces introduced by M.-F. Vign\'eras \cite{Vigneras} and recently studied systematically by B. Linowitz and J. Voight \cite{LinowitzVoight} provides many examples of isospectral Riemann surfaces that do not arise from Sunada's construction.  Outside of the setting of Riemann surfaces, other methods, including a general technique involving torus actions, have produced many interesting examples.

Constructions of isospectral but nonisometric manifolds enable one to identify geometric properties of Riemannian manifolds that are not spectrally determined.  Around 1990, W. Abikoff asked a natural spectral question about Riemann surfaces:  

\begin{question}
What is the relationship (if any) between the Jacobians of two isospectral Riemann surfaces?  
\end{question}

One partial answer to this question is given by a theorem of D. Prasad and C.S. Rajan \cite{PrasadRajan}.  (See also \cite{GordonMakoverWebb}.) 

\begin{thm}\label{thm.PR} (D. Prasad and C.S. Rajan) If two compact Riemann surfaces $M_1$ and $M_2$ are Sunada isospectral, then their Jacobians are isogenous.  
\end{thm}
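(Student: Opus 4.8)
The plan is to deduce the isogeny from the representation-theoretic content of almost conjugacy, applied to the natural $\mathbb{Q}$-structure on homology carried by the covering $M\to M_i$. Write $M_i=\Ga_i\bs M$ and let $G$ act holomorphically — in the Riemann surface case, isospectrality forces this, since a Sunada-isospectral pair arises from almost conjugate $\Ga_1,\Ga_2$ inside a finite group $G$ of isometries of a common cover $M$; after uniformizing so that $G$ acts by isometries of a hyperbolic (hence Kähler, indeed complex) metric, the $G$-action is by biholomorphisms. Thus Theorem~\ref{thm.mt} with $d=1$ already applies and identifies $J(M_i)=J_G^1(M_i)$, so in fact this theorem is the $d=1$ special case of the Main Theorem. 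The substance of the proof I would give is therefore the mechanism behind the Main Theorem specialized to curves, which I now sketch.

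First I would recall the algebraic transplantation principle underlying Sunada's method: if $\Ga_1,\Ga_2\le G$ are almost conjugate, then the permutation $G$-modules $\mathbb{Q}[\Ga_1\bs G]$ and $\mathbb{Q}[\Ga_2\bs G]$ are isomorphic, equivalently the induced representations $\operatorname{Ind}_{\Ga_1}^G\mathbf{1}\cong\operatorname{Ind}_{\Ga_2}^G\mathbf{1}$ are isomorphic as $\mathbb{Q}G$-modules (Gassmann equivalence). Consequently, for any $\mathbb{Q}G$-module $V$, Frobenius reciprocity gives a $\mathbb{Q}$-linear isomorphism $V^{\Ga_1}=\operatorname{Hom}_G(\operatorname{Ind}_{\Ga_1}^G\mathbf{1},V)\cong\operatorname{Hom}_G(\operatorname{Ind}_{\Ga_2}^G\mathbf{1},V)=V^{\Ga_2}$. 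Apply this with $V=H_1(M;\mathbb{Q})$, on which $G$ acts; since $\Ga_i$ acts freely, the covering $M\to M_i$ induces an isomorphism $H_1(M;\mathbb{Q})^{\Ga_i}\xrightarrow{\ \sim\ }H_1(M_i;\mathbb{Q})$ (transfer argument). Chaining these, I obtain a $\mathbb{Q}$-linear isomorphism $\Phi\colon H_1(M_1;\mathbb{Q})\xrightarrow{\ \sim\ }H_1(M_2;\mathbb{Q})$; clearing denominators, a suitable integer multiple of $\Phi$ carries $H_1(M_1;\mathbb{Z})$ into $H_1(M_2;\mathbb{Z})$ with finite cokernel.

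The key step is to check that $\Phi$ is compatible with the complex (Hodge) structures, so that it descends to an isogeny of complex tori. Here I would invoke that $G$ acts by holomorphic automorphisms, hence $H^0(M;\Omega^1)$ is a $G$-stable subspace of $H^1(M;\mathbb{C})=H_1(M;\mathbb{C})^*$ and the Hodge decomposition $H^1(M;\mathbb{C})=H^{1,0}\oplus H^{0,1}$ is $G$-equivariant. The same Frobenius-reciprocity isomorphism, applied now with $\mathbb{C}$-coefficients, respects this decomposition because it is induced by a $G$-module map; restricting to $\Ga_i$-invariants recovers the Hodge decomposition on $M_i$, and the transplantation isomorphism intertwines the two. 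Therefore $\Phi_{\mathbb{C}}$ sends $H^{1,0}(M_1)$ to $H^{1,0}(M_2)$, i.e. it is holomorphic for the complex structures defining $J(M_1)$ and $J(M_2)$, and the dual/quotient map $J(M_1)\to J(M_2)$ is a surjective homomorphism of complex tori with finite kernel — an isogeny.

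The main obstacle — and the only place real care is needed — is the Hodge-compatibility of the transplantation map: one must verify that the purely formal isomorphism coming from Gassmann equivalence is not merely a $\mathbb{Q}$-linear (or real-linear) isomorphism but genuinely respects the complex structure, which ultimately rests on the chosen $G$-module isomorphism $\mathbb{Q}[\Ga_1\bs G]\cong\mathbb{Q}[\Ga_2\bs G]$ being a map of $G$-modules (so that it commutes with everything $G$-equivariant, including the Hodge filtration). A secondary point is that one does not control $\Phi$ well enough to preserve the polarization (the intersection form), so one genuinely obtains an isogeny rather than an isomorphism of principally polarized abelian varieties; this is expected and consistent with the known examples of isospectral Riemann surfaces with non-isometric Jacobians. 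Modulo these checks, the argument is exactly the $k=1$ instance of the proof of Theorem~\ref{thm.mt}.
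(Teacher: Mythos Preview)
The paper does not give its own proof of Theorem~\ref{thm.PR}; it is quoted as a known result of Prasad--Rajan (with the transplantation proof attributed to \cite{GordonMakoverWebb}) and then subsumed as the $d=1$ case of the Main Theorem~\ref{thm.mt}. Your proposal is correct and is precisely this specialization: you correctly observe that for hyperbolic Riemann surfaces the isometry hypothesis of Sunada's theorem coincides with the biholomorphism hypothesis of Theorem~\ref{thm.mt}, and your transplantation argument is the paper's mechanism.

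The only difference in framing is that you work with \emph{invariants} of $H_1(M;\Q)$ throughout (via transfer), whereas the paper's proof of Theorem~\ref{thm.mt'} uses transplantation of \emph{invariants} on forms/cohomology and transplantation of \emph{coinvariants} on chains/homology, linking them through the pairing Lemma~\ref{pairing}; over $\Q$ invariants and coinvariants of a finite group coincide, so the two routes are equivalent. Your identification of the Hodge-compatibility check as the one genuine point requiring care matches the paper's emphasis (Proposition~\ref{FiltrationCompat} and the $G$-equivariance of the Hodge filtration), and your caveat about polarizations is apt.
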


The hypotheses of Sunada's Theorem~\ref{thm.sun} and our main theorem~\ref{thm.mt} are essentially the same in the case of hyperbolic Riemann surfaces, since biholomorphic maps are precisely the orientation-preserving isometries of the metric.  In the case of K\"ahler manifolds of higher dimension, it makes no sense to ask whether isospectral manifolds  have isogenous intermediate Jacobians, since the intermediate Jacobians depend only on the complex structure while the Laplace spectra depend only on the metric.  Instead, Theorems~\ref{thm.mt} and \ref{thm.laz} may be viewed as two natural generalizations of Theorem~\ref{thm.PR} on Riemann surfaces, the former focusing on the complex structure and the latter on the Riemannian metric.

\subsection{Transplantation}\label{trans}  Two subgroups $\Ga_1$ and $\Ga_2$ of a finite group $G$ are said to be \emph{representation equivalent in $G$} if the two linear representations $\C[\Ga_1\bs G]$ and $\C[\Ga_2\bs G]$ obtained from the right action of $G$ on the right coset spaces $\Ga_1\bs G$ and $\Ga_2\bs G$ are equivalent as linear (right) representations of $G$.  For a subgroup $\Ga\subseteq G$, the representation $\C[\Ga\bs G]$ is just the right representation $\Ind_{\Ga}^G(1_{\Ga})$ of $G$ obtained by inducing up to $G$ the trivial one-dimensional complex representation $1_{\Ga}$ of $\Ga$; thus the representation equivalence condition can be restated as $\Ind_{\Ga_1}^G(1_{\Ga_1})\cong\Ind_{\Ga_2}^G(1_{\Ga_2})$ as right representations of $G$.  As already observed by Gassmann \cite{Gassmann}, the character theory of representations of finite groups implies that $\Ga_1$ and $\Ga_2$ are representation equivalent in $G$ if and only if they satisfy the almost conjugacy condition of Sunada's Theorem.  There are numerous proofs of Sunada's Theorem, some directly using the almost conjugacy condition, others using the representation equivalence of $\Ga_1$ and $\Ga_2$.

P. Buser \cite{Busertrans} and S. Zelditch \cite{Zelditch} noted that the representation equivalence of $\Ga_1$ and $\Ga_2$ in Sunada's theorem furnishes a ``transplantation,'' an explicit map that ``transplants'' a $\lambda$-eigenfunction on $\Ga_1\bs M$ to a $\lambda$-eigenfunction on $\Ga_1\bs M$.  Transplantation was clarified and studied more systematically by P. B\'erard \cite{Berard1}, \cite{Berard2}, and simplified further in \cite{BrooksGornetPerry}.  The essential simplicity of the underlying algebra was clarified in \cite{GordonMakoverWebb}, where the algebraic transplantation of the modules of invariants and coinvariants of a $G$-module were used to give a geometric proof of Prasad and Rajan's Theorem~\ref{thm.PR}, and to provide another partial answer to the question posed by Abikoff.  The current paper is motivated by and follows the methods of \cite{GordonMakoverWebb}.  

\begin{rem} We wish to underscore the point that Theorem \ref{thm.mt} has nothing to do with isospectrality \emph{per se}, but is an application of the method of transplantation.  Although transplantation arose originally in the construction of isospectral manifolds, we anticipate that, because of its algebraic simplicity that we have tried to convey, transplantation should be more broadly applicable.  In particular, transplantation of coinvariants first arose in the construction of isogenous Jacobians, not in the setting of spectral geometry.
\end{rem}
\subsection{Acknowledgments}  The first and fourth authors thank W. Abikoff for interesting them in Jacobians, and Jes\'us Ruiz and the Universidad Complutense of Madrid for their hospitality during a sabbatical term, when this work was conceived.  The second author thanks Dartmouth College for its hospitality.

\section{Notation} For a smooth manifold $X$, we denote by $\ms{A}^n(X)$ the space of real-valued smooth $n$-forms on $X$.  For a real vector space $V$, we denote its complexification $\C\otimes_{\R}V$ by $V_{\C}$; in particular, the space of complex-valued $n$-forms on $X$ is denoted by $\ms{A}^n(X)_{\C}$ or by $\ms{A}^n_{\C}(X)$.  If $V$ is a real vector space equipped with a complex structure $I$ (i.e., an endomorphism $I\in\End_{\R}(V)$ satisfying $I^2=-\Id$), then its complexification $V_{\C}$ admits a decomposition $V_{\C}=V^{1,0}\oplus V^{0,1}$ into complex subspaces $V^{1,0}$ and $V^{0,1}$, where $V^{1,0}$ (respectively, $V^{0,1}$) is the $i$-eigenspace (respectively, the $(-i)$-eigenspace) of the complex linear extension $I_{\C}$ of $I$.  When $V$ is the cotangent space $T^*_xX$ of a complex manifold $X$, this decomposition of its complexification $(T^*_xX)_{\C}=(T^*_xX)^{1,0}\oplus (T^*_xX)^{0,1}$ gives rise to the usual bigrading by type on the complex-valued smooth forms: $\ms{A}_{\C}^n(X)=\coprod_{p+q=n}\ms{A}^{p,q}(X)$; in local holomorphic coordinates $z_1,\dotsc,z_d$, a form $\eta\in\ms{A}^{p,q}(X)$ has the coordinate expression 
$$\eta=\sum_{1\le i_1<\dotsc,i_p<d,1\le j_1<\dotsc<j_q\le d}f_{i_1,\dotsc,i_p;j_1,\dotsc,j_q}dz_{i_1}\wedge\dots\wedge dz_{i_p}\wedge d\bar{z}_{j_1}\wedge\dotsc\wedge d\bar{z}_{j_q}$$
where $d$ is the complex dimension of $X$.

Note that we denote the integral homology and cohomology of a space $X$ by $H_n(X)$ and $H^n(X)$, but by $H^n(X,\Z)$ we mean the quotient of $H^n(X)$ by its torsion subgroup.  For a space $X$ with finitely generated integral homology, this means by the universal coefficient theorem that, since the kernel of the natural surjection $H^n(X,\Z)\to\Hom(H_n(X),\Z)$ is torsion, $H^n(X,\Z)$ is naturally identified with $\Hom(H_n(X),\Z)$.  

\section{K\"ahler manifolds}  In this section we recall some basic definitions and establish notation.  Standard sources for this material are \cite{GriffithsHarris}, \cite{Huybrechts}, \cite{Voisin}, and \cite{Wells}. 

Let $X$ be a complex manifold, with complex structure denoted by $I$, so $I^2=-\Id$.  Recall that a Riemannian metric $g$ on $X$ is a \emph{Hermitian structure} if for any point $x\in X$, the endomorphism $I_x$ of the tangent space $T_xX$ is a linear isometry relative to the Riemannian inner product $g_x$ on $T_xX$: thus for any tangent vectors $v,w\in T_xX$,
\begin{equation}\label{IOrthog}
g(I(v),I(w))=g(v,w).
\end{equation}
Then the \emph{fundamental $2$-form} $\omega$ on $X$ is defined by $\omega(v,w)=g(I(v),w)=-g(v,I(w))$.  Then the Riemannian inner product and the fundamental $2$-form are essentially the real and imaginary parts of a Hermitian metric on $X$; more precisely, $h(v,w)=g(v,w)-i\omega(v,w)$ defines a positive-definite Hermitian inner product $h$ on $T_xX$.  Any two of the data $g$, $I$, and $\omega$ determine the third.  Recall the definition
\begin{dfn}
A complex Hermitian manifold $X$ is \emph{K\"ahler} if its fundamental $2$-form $\omega$ is closed: $d\omega=0$.
\end{dfn}
A convenient equivalent condition is that for every point $x\in X$, there is a holomorphic coordinate system $z_1,\dotsc,z_d$ centered at $x$ such that the Hermitian metric $h$ osculates to second order with the standard Hermitian metric on $\C^d$: i.e., the matrix $H=[h_{ij}]_{ij}=[h(\Dbd{z_i},\Dbd{\ol{z}_j})]_{ij}$ representing the Hermitian form $h$ in these coordinates has the form $H=I_d+O(\sum_i\abs{z_i}^2)$ (see e.g. \cite{Voisin}, Proposition 3.14).  A consequence is that the bigrading $\ms{A}^k(X)=\coprod_{p+q=k}\ms{A}^{p,q}(X)$ on differential forms descends to a bigrading on cohomology, the Hodge decomposition:
\begin{prop}\label{HodgeDecomp}
Let $X$ be a compact K\"ahler manifold.  For each $n$, there is a direct sum decomposition as $\C$-vector spaces
$$H^n(X,\C)=\coprod_{p+q=n}H^{p,q}(X),$$
whose summands $H^{p,q}(X)$ are independent of the K\"ahler structure.  The complex structure on $H^n(X,\R)$ arising from the complex manifold structure on $X$ defines an $\R$-linear conjugation involution on $H^n(X,\R)$ whose $\C$-linear extension to $H^n(X,\C)$ interchanges $H^{p,q}(X)$ and $H^{q,p}(X)$: 
$$\ol{H^{p,q}(X)}=H^{q,p}(X).$$
\end{prop}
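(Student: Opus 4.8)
The plan is to fix an arbitrary K\"ahler metric $g$ on $X$, with fundamental $2$-form $\omega$, deduce the decomposition from Hodge theory together with the K\"ahler identities, check its compatibility with complex conjugation, and finally argue that the resulting subspaces do not depend on $g$.

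\emph{Step 1 (Harmonic representatives).} Let $\Delta_d,\Delta_\partial,\Delta_{\bar\partial}$ denote the Laplacians attached to $d,\partial,\bar\partial$ and the metric $g$. Since $X$ is compact, the Hodge theorem gives that the space $\ms{H}^n(X)$ of $\Delta_d$-harmonic complex $n$-forms is finite dimensional and that the natural map $\ms{H}^n(X)\to H^n(X,\C)$ is a $\C$-linear isomorphism.

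\emph{Step 2 (K\"ahler identities and the splitting of harmonic forms).} The engine of the argument is the K\"ahler identity $[\Lambda,\bar\partial]=-i\partial^{*}$, where $\Lambda$ is the formal adjoint of $\omega\wedge(-)$; together with its conjugate it yields $\Delta_d=2\Delta_{\bar\partial}=2\Delta_\partial$. These are pointwise identities of operators, and using the osculation property of K\"ahler coordinates recorded just above the statement they reduce to a direct computation on flat $\C^d$. Since $\Delta_{\bar\partial}$ manifestly preserves the bidegree $(p,q)$, so does $\Delta_d$; hence $\ms{H}^n(X)=\coprod_{p+q=n}\ms{H}^{p,q}(X)$ with $\ms{H}^{p,q}(X):=\ms{H}^n(X)\cap\ms{A}^{p,q}(X)$. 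Transporting this splitting through the isomorphism of Step 1 and setting $H^{p,q}(X)$ equal to the image of $\ms{H}^{p,q}(X)$ yields the asserted decomposition $H^n(X,\C)=\coprod_{p+q=n}H^{p,q}(X)$. (As a byproduct, composing with the Dolbeault isomorphism identifies $\ms{H}^{p,q}(X)$ with $H^q(X,\Omega_X^p)$.)

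\emph{Step 3 (Conjugation).} Because $g$ is a real metric, complex conjugation of forms commutes with $d$ and with the Hodge star, hence with $\Delta_d$; as it carries $\ms{A}^{p,q}(X)$ onto $\ms{A}^{q,p}(X)$, it restricts to a conjugate-linear isomorphism from $\ms{H}^{p,q}(X)$ onto $\ms{H}^{q,p}(X)$, and therefore $\ol{H^{p,q}(X)}=H^{q,p}(X)$ for the conjugation on $H^n(X,\C)$ determined by the real form $H^n(X,\R)$.

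\emph{Step 4 (Independence of the K\"ahler structure --- the main obstacle).} The subspaces above were manufactured from $g$, so the substantive point is that they are intrinsic. I would introduce the decreasing filtration $F^pH^n(X,\C):=\coprod_{r\ge p}H^{r,n-r}(X)$ and prove, using the $\partial\bar\partial$-lemma (itself a consequence of the K\"ahler identities and Step 1), that $F^pH^n(X,\C)$ coincides with the image of the map $\mathbb{H}^n(X,\Omega_X^{\ge p})\to\mathbb{H}^n(X,\Omega_X^{\bullet})=H^n(X,\C)$ induced by the inclusion of the naively truncated holomorphic de Rham complex --- a description referring only to the complex manifold $X$. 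One then recovers each summand as $H^{p,q}(X)=F^pH^n(X,\C)\cap\ol{F^qH^n(X,\C)}$, where the bar is the $g$-independent conjugation of Step 3; since both $F^\bullet$ and this conjugation are metric free, so is every $H^{p,q}(X)$. I expect this last step to require the most care: the Dolbeault identification in Step 2 shows only that the \emph{dimensions} $h^{p,q}$ are holomorphic invariants, whereas pinning down the subspace of $H^n(X,\C)$ genuinely needs the hypercohomology reformulation above.
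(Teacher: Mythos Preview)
Your proposal is correct and follows the standard line of argument found in the very references the paper cites. In fact the paper does not give a proof at all: its entire ``proof'' is the sentence ``See, e.g., \cite{Huybrechts}, Corollary 3.2.12 or \cite{Voisin}, Proposition 6.11.'' Your Steps 1--4 are precisely the outline of the argument in those texts: harmonic theory plus the K\"ahler identity $\Delta_d=2\Delta_{\bar\partial}$ to get the bigraded splitting of harmonic forms, the reality of the metric for the conjugation symmetry, and then the identification of the Hodge filtration with the image of $\mathbb{H}^n(X,\Omega_X^{\ge p})$ (equivalently, degeneration of the Hodge--de Rham spectral sequence, or the $\partial\bar\partial$-lemma) to remove the dependence on the metric. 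Your diagnosis in Step 4 that this last point is the substantive one, and that the Dolbeault isomorphism alone only pins down the dimensions $h^{p,q}$ rather than the subspaces, is exactly right. One small stylistic remark: an equivalent metric-free description you could have used is that $H^{p,q}(X)\subset H^n(X,\C)$ consists of those classes admitting a $d$-closed representative of pure type $(p,q)$; this is equivalent to your $F^p\cap\ol{F^q}$ formulation via the $\partial\bar\partial$-lemma and is sometimes quicker to verify.
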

\begin{proof}
See, e.g., \cite{Huybrechts}, Corollary 3.2.12 or \cite{Voisin}, Proposition 6.11.
\end{proof}

\begin{dfn}
Let $X$ be a compact K\"ahler manifold.  The \emph{Hodge filtration} on the $n$th complex cohomology of $X$ is the filtration whose $p$th level is defined as 
$$F^p H^n(X,\C)=\coprod_{r\ge p}H^{r,n-r}(X)=H^{p,n-p}(X)\oplus H^{p+1,n-p-1}(X)\oplus\dotsc\oplus H^{n,0}(X).$$
\end{dfn}
Because of the symmetry $\ol{H^{p,q}(X)}=H^{q,p}(X)$, the Hodge decomposition \eqref{HodgeDecomp} takes the form 
$$H^n(X,\C)=\underbrace{H^{n,0}(X)\oplus\dotsc\oplus H^{p,n-p}(X)}_{F^p H^n(X,\C)}\oplus\underbrace{H^{p-1,n-p+1}(X)\oplus\dotsc\oplus H^{0,n}}_{\ol{F^{n-p+1}H^n(X,\C)}},$$
i.e.,
\begin{equation}\label{HodgeFiltDecomp}
H^n(X,\C)=F^p H^n(X,\C)\oplus\ol{F^{n-p+1}H^n(X,\C)}.
\end{equation}
The Hodge decomposition and the Hodge filtration determine one another; indeed, the Hodge decomposition is the associated graded module of the cohomology filtered by the Hodge filtration.

We record for subsequent use the following:
\begin{prop}\label{FiltrationCompat}
For each $n$ and $p$, let $F^p\ms{A}_{\C}^n(X)$ denote the space of complex-valued differential $n$-forms on $X$ of type $(r,n-r)$ where $r\ge p$, i.e., $F^p\ms{A}_{\C}^n(X)=\coprod_{r\ge p}\ms{A}_{\C}^{r,n-r}(X)$.  Then
$$F^p H^n(X,\C)=\frac{\ker(d:F^p\ms{A}_{\C}^n(X)\to F^p\ms{A}_{\C}^{n+1}(X))}{\im(d:F^p\ms{A}_{\C}^{n-1}(X)\to F^p\ms{A}_{\C}^n(X))}.$$
\end{prop}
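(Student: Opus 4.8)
The plan is to show that the inclusion of complexes $F^p\ms{A}_{\C}^\bullet(X)\hookrightarrow\ms{A}_{\C}^\bullet(X)$ induces an isomorphism from the cohomology on the right-hand side, namely $H^n$ of the subcomplex $\bigl(F^p\ms{A}^\bullet_{\C}(X),d\bigr)$, onto $F^pH^n(X,\C)\subseteq H^n(X,\C)$. First one checks that $F^p\ms{A}^\bullet_{\C}(X)$ is indeed a subcomplex: if $\eta\in\ms{A}^{r,n-r}_{\C}(X)$ with $r\ge p$, then $d\eta=\partial\eta+\bar\partial\eta$ has components of type $(r+1,n-r)$ and $(r,n-r+1)$, both of holomorphic degree $\ge p$. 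Thus the inclusion induces a map $H^n\bigl(F^p\ms{A}^\bullet_{\C}(X)\bigr)\to H^n(X,\C)$, and the claim reduces to showing that this map is injective with image $F^pH^n(X,\C)$.

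Identifying the image is the easy half and uses only standard Hodge theory for the K\"ahler metric. Because the $d$-Laplacian commutes with the projections onto the summands $\ms{A}^{r,s}_{\C}(X)$, harmonic projection preserves type, and the space $\mc{H}^n$ of $d$-harmonic $n$-forms splits as $\coprod_{r+s=n}\mc{H}^{r,s}$, each summand $\mc{H}^{r,s}$ representing $H^{r,s}(X)$ (this underlies Proposition~\ref{HodgeDecomp}). Given a $d$-closed form $\gamma\in F^p\ms{A}^n_{\C}(X)$, write $\gamma=\sum_{r\ge p}\gamma^{r,n-r}$; its harmonic representative is the sum of the harmonic parts of the pieces $\gamma^{r,n-r}$, which lies in $\coprod_{r\ge p}\mc{H}^{r,n-r}$, so $[\gamma]\in F^pH^n(X,\C)$. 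Conversely, any class in $F^pH^n(X,\C)$ has a harmonic representative that already lies in $\coprod_{r\ge p}\mc{H}^{r,n-r}\subseteq F^p\ms{A}^n_{\C}(X)$, hence lies in the image.

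The substantive point is injectivity. Suppose $\gamma\in F^p\ms{A}^n_{\C}(X)$ is $d$-closed and equals $d\beta$ for some $\beta\in\ms{A}^{n-1}_{\C}(X)$ (with no constraint on its type); we must find a primitive of $\gamma$ lying in $F^p\ms{A}^{n-1}_{\C}(X)$. I would argue by an induction that raises the filtration level of the primitive one step at a time. Write $\beta=\sum_s\beta^{s,n-1-s}$ and let $s_0$ be the least index with $\beta^{s_0,n-1-s_0}\ne0$; if $s_0\ge p$ we are done. Otherwise consider the type-$(s_0,n-s_0)$ component of the identity $\gamma=d\beta$: the left side vanishes since $s_0<p$, while the right side equals $\partial\beta^{s_0-1,n-s_0}+\bar\partial\beta^{s_0,n-1-s_0}$, whose first term is $0$ by minimality of $s_0$; hence $\bar\partial\beta^{s_0,n-1-s_0}=0$. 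Now $\bar\partial$-Hodge theory gives a decomposition $\beta^{s_0,n-1-s_0}=h+\bar\partial\eta$ with $h$ a $\bar\partial$-harmonic form and $\eta\in\ms{A}^{s_0,n-2-s_0}_{\C}(X)$; the K\"ahler identity $\Delta_{\bar\partial}=\tfrac12\Delta_d$ shows that $h$ is also $d$-harmonic, hence $d$-closed. Replacing $\beta$ by $\beta-h-d\eta$ leaves the equation $d\beta=\gamma$ intact, annihilates the holomorphic-degree-$s_0$ component, and introduces nothing of holomorphic degree below $s_0+1$. Iterating finitely often yields a primitive in $F^p\ms{A}^{n-1}_{\C}(X)$, so $\gamma$ represents $0$ in $H^n\bigl(F^p\ms{A}^\bullet_{\C}(X)\bigr)$.

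The step bearing all the weight---and the only point at which the K\"ahler hypothesis enters---is this last one: a $d$-exact form of holomorphic degree $\ge p$ is already $d$-exact inside the subcomplex $F^p\ms{A}^\bullet_{\C}(X)$. This is equivalent to the degeneration at $E_1$ of the Fr\"olicher spectral sequence of the filtered complex $\bigl(\ms{A}^\bullet_{\C}(X),F^\bullet\bigr)$; one could instead quote that degeneration from Hodge theory and read off the Proposition, but the inductive argument above keeps the role of the K\"ahler identities explicit.
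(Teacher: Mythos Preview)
Your argument is correct. The paper does not actually give a proof of this proposition; it simply refers the reader to Voisin, \emph{Hodge Theory and Complex Algebraic Geometry, I}, Proposition~7.5. So there is nothing to compare on the paper's side beyond that citation.

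Your direct argument is essentially the standard one and is close in spirit to Voisin's. A small stylistic remark: Voisin organizes the injectivity step around the $\partial\bar\partial$-lemma (if $\gamma\in F^p$ is $d$-exact, one produces a primitive in $F^p$ in one stroke), whereas you run an explicit induction raising the holomorphic degree of the primitive using $\bar\partial$-Hodge theory together with the K\"ahler identity $\Delta_{\bar\partial}=\tfrac12\Delta_d$. These are equivalent at bottom, and your version has the virtue of making transparent exactly where the K\"ahler hypothesis is used. The closing remark identifying this with $E_1$-degeneration of the Fr\"olicher spectral sequence is accurate and a helpful orientation for the reader.
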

\begin{proof}
See \cite{Voisin}, Proposition 7.5.
\end{proof}

\section{Intermediate Jacobians}

Let $X$ be a compact K\"ahler manifold of complex dimension $d$.  Let $k$ be an integer in the range $1\le k\le d$.  For each $j$, let $b_j(X)=\dim_{\R}(H^j(X,\R))$.  Then the Hodge filtration of the odd-dimensional cohomology groups $H^{2k-1}(X,\C)=H^{2k-1}(X,\R)_{\C}$ yields (setting $n=2k-1$ and $p=k$ in \eqref{HodgeFiltDecomp}) a direct sum decomposition $$H^{2k-1}(X,\C)=F^k H^{2k-1}(X)\oplus\ol{F^k H^{2k-1}(X)}.$$
Since $H^{2k-1}(X,\R)\cap F^k H^{2k-1}(X,\C)=0$ and the real dimensions of  $H^{2k-1}(X,\R)$ and $F^k H^{2k-1}(X,\C)$ are both $b_{2k-1}(X)$, it follows that the composite mapping
$$H^{2k-1}(X,\R)\hookrightarrow H^{2k-1}(X,\C)\to H^{2k-1}(X,\C)/F^k H^{2k-1}(X,\C)$$ 
is an isomorphism of real vector spaces.  Let $H^{2k-1}(X,\Z)$ denote the integer cohomology of $X$ modulo torsion.  Then the image under this composite mapping of the full lattice $H^{2k-1}(X,\Z)\subseteq H^{2k-1}(X,\R)$ is a full lattice in $H^{2k-1}(X,\C)/F^k H^{2k-1}(X,\C)$.  Since the latter is a complex vector space, its quotient modulo the lattice is a complex torus whose complex dimension is $\frac{1}{2}b_{2k-1}(X)$.
\begin{dfn}\label{def.jac} (\cite{GriffithsI}; see also \cite{Voisin} or \cite{CarlsonMueller-StachPeters})
The $k$th \emph{(Griffiths) intermediate Jacobian} of $X$ is the complex torus
$$J^k(X)=\frac{H^{2k-1}(X,\C)}{F^k H^{2k-1}(X,\C)\oplus H^{2k-1}(X,\Z)}.$$
\end{dfn}

\begin{rem}\label{rem.jac} A complex manifold $X$ is said to be of \emph{K\"ahler type} if it admits a K\"ahler structure.  For $X$ of K\"ahler type, the Hodge decomposition of $H^n(X,\C)$ and thus the intermediate Jacobians are independent of the choice of K\"ahler structure; they depend only on the complex structure.
\end{rem}

\begin{rem}
Perhaps the most familiar definition of the classical Jacobian of a Riemann surface $X$ is as the quotient $\Omega^1(X)^*/H_1(X)$ of the space of linear functionals on the space $\Omega^1(X)$ of holomorphic $1$-forms on $X$ modulo the periods (integrals over $1$-cycles).  For a compact K\"ahler manifold $X$ of complex dimension $d$, the top intermediate Jacobian $J^d(X)$ recovers this classical definition, as follows.

The perfect cup product pairing $H^{2k-1}(X,\C)\times H^{2d-2k+1}(X,\C)\to H^{2d}(X,\C)\cong\C$ induces the Poincar\'e duality isomorphism $H^{2k-1}(X,\C)\to(H^{2d-2k+1}(X,\C))^*$, which identifies the complex dual $(F^{d-k+1}H^{2d-2k+1}(X,\C))^*$ of the subspace $F^{d-k+1}H^{2d-2k+1}(X,\C)$ with the quotient $H^{2k-1}(X,\C)/F^{d-k+1}H^{2d-2k+1}(X,\C)^{\perp}$ by the orthogonal complement $F^{d-k+1}H^{2d-2k+1}(X,\C)^{\perp}\subseteq H^{2k-1}(X,\C)$.  But the orthogonal complement is given by $F^{d-k+1}H^{2d-2k+1}(X,\C)^{\perp}=F^k H^{2k-1}(X,\C)$. (Indeed if $\alpha$ is a $(2d-2k+1)$-form of bigrading $(r,2d-2k+1-r)$ with $r\ge d-k+1$ and $\beta$ is a $(2k-1)$-form of bigrading $(s,2k-1-s)$ with $s\ge k$, then $\alpha\wedge\beta$ has bigrading $(r+s,2d-r-s)$.  The fact that $r+s$ exceeds the complex dimension $d$ of $X$ implies  that $\alpha\wedge\beta=0$, since the local representation of $\alpha\wedge\beta$ in holomorphic coordinates $z_1,\dotsc,z_d$ would involve the wedge product of more than $d$ of the forms $dz_1,\dotsc,dz_d$.)  Thus Poincar\'e duality identifies $(F^{d-k+1}H^{2d-2k+1}(X,\C))^*$ with $H^{2k-1}(X,\C)/F^k H^{2k-1}(X,\C)$ and $H^{2k-1}(X,\Z)$ with $H_{2d-2k+1}(X)$, yielding the alternative description 
$$J^k(X)=(F^{d-k+1}H^{2d-2k+1}(X,\C))^*/H_{2d-2k+1}(X)$$
of the intermediate Jacobian.  In particular, in the case $k=d$, one obtains $J^d(X)=(F^1 H^1(X,\C))^*/H_1(X)=H^{1,0}(X)^*/H_1(X)$, which by the Dolbeault isomorphism is $H^0(X,\Omega^1(X))^*/H_1(X)=\Omega^1(X)^*/H_1(X)$.  
\end{rem}


\begin{dfn} \label{isogeny}
Let $T_1$ and $T_2$ be complex tori.  An {\emph{isogeny}} is a surjective
homomorphism $T_1 \to T_2$ with finite kernel.  If such an isogeny exists, we
say that $T_1$ and $T_2$ are {\emph{isogenous}}.  Isogeny is an equivalence
relation (see e.g. \cite{BirkenhakeLange}, Corollary 1.2.7).
\end{dfn}

\section{Algebraic Transplantation}
In this section, we recall from \cite{GordonMakoverWebb} some simple formulas for the algebraic transplantation of invariants and coinvariants in the group-theoretic context of Sunada's theorem, that of two representation equivalent subgroups of a finite group $G$ as in Subsection~\ref{trans}.  For the convenience of the reader, we have included some material from \cite{GordonMakoverWebb}, since the explanation there serves to motivate the existence of transplantations in the Gassmann-Sunada setting; however, the impatient reader may skip from the first paragraph of subsection \ref{transinvsec} directly to Theorem \ref{transinv} and from the first paragraph of subsection \ref{transcoinvsec} to Theorem \ref{transcoinv} without any loss, since it is straightforward to verify directly that the explicit formulas for transplantation of invariants and coinvariants satisfy the desired conditions.

\subsection{Notation and preliminaries}\label{notation}
Let $G$ be a group, and $R$ a commutative ring.  We denote by $RG$ the group algebra of $G$ over $R$; as an $R$-module it is free on $G$, with the algebra multiplication defined by the group multiplication on the basis $G$.  

A left (respectively, right) module $V$ over $RG$ is just an $R$-module $V$ on
which $G$ acts on the left (respectively, on the right) $R$-linearly.  We will call an $RG$-module simply a $G$-module if the coefficient ring $R$ is understood.  By a $G$-{\emph{map}} we mean an $RG$-module map, i.e., an $R$-linear $G$-equivariant map.

Given a right $RG$-module $W$, the module of $\emph G$-{\emph {invariants}} is the submodule defined by $W^G=\{w\in W: \forall g \in G, w.g=w\}$; it is the largest submodule of $W$ on which $G$ acts trivially.  Dually, given a left $RG$-module $V$, the module of $\emph G$-{\emph {coinvariants}} is the quotient defined by $V_G=V/S$, where $S$ is the $R$-submodule generated by $\{g.v-v:g \in G, v \in V\}$; it is the largest quotient module of $V$ on which $G$ acts trivially. 

\begin{rem}\label{mean}  If $W$ is a right $G$-module and $\Ga \subseteq G$ is a subgroup, then for $w \in W^{\Ga}$ and $x \in G$, the notation $w.\Ga x$ is meaningful --- indeed, the value of $w.x$ depends only upon the right coset $\Ga x$ of $x$, since for any $\gamma \in \Ga$, $w.\gamma x=w.x$, as $w$ is $\Ga$-invariant.  Similarly, for a left $G$-module $V$ and $v \in V$, the residue class $\overline{\Ga x.v}$ in $V_{\Ga}$ makes sense --- the value of $\overline{x.v}$ in $V_{\Ga}$ depends only upon the right coset $\Ga x$ of $x$, since $\overline{\gamma x.v}-\overline{x.v}=\overline{\gamma .(xv)-xv}$, which is zero in $V_{\Ga}$.
\end{rem}

For a left (respectively, right) $G$-set $X$, $R[X]$ denotes the left (respectively, right) $RG$-module that is free on $X$ as $R$-module; the $G$-action on $R[X]$ is that determined by the action of $G$ on $X$.  

\subsection{Transplantation of invariants}\label{transinvsec}

We first record a simple explicit formula for transplantation of invariants.  Let $G$ be a group, let $\Ga_1$, $\Ga_2$ be subgroups of $G$, and let $W$ be any right $RG$-module.  Let $\tau:R[\Ga_1 \bs G] \to R[\Ga_2 \bs G]$ be a map of right $G$-modules.  We will show that $\tau$ induces a contravariant functorial $R$-linear map $\tau^{\sharp }:W^{\Ga_2} \to W^{\Ga_1}$ on invariants.  We first recall two
simple isomorphisms.  

Let $\Ga$ be a subgroup of $G$, and let $W$ be a right $R\Ga$-module.  Now $RG$ is naturally a left $R\Ga$-module, and since the Hom functor is contravariant in its first argument, this left action makes the $R$-module of right $R\Ga$-module maps $\Hom_{R\Ga}(RG,W)$ (often called the {\emph{coinduced module}}) into a {\emph{right}} $G$-module via $(f.g)(x)=f(g.x)$ for $g \in G$, $x \in RG$, $f \in \Hom_{R\Ga}(RG,W)$.

\begin{prop}\label{Shapiroinv}
There is a natural $R$-linear isomorphism 
$$W^{\Ga} \to \Hom_{R\Ga}(RG,W)^G$$
of the $\Ga$-invariants of $W$ with the $G$-invariants of the coinduced module.  Explicitly, given $f \in \Hom_{R\Ga}(RG,W)$, associate the element $f(1) \in W^{\Ga}$; given $w \in W^{\Ga}$, associate the function $c_w\in\Hom_{R\Ga}(RG,W)$ which takes the constant value $w$ on every element of $G$.
\end{prop}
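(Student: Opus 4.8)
The plan is to verify by hand that the two explicit maps in the statement are well defined, mutually inverse, natural, and $R$-linear; no appeal to Shapiro's lemma or to freeness of $RG$ over $R\Ga$ is needed. Write $C=\Hom_{R\Ga}(RG,W)$ for the coinduced module with its right $G$-action $(f.g)(x)=f(g.x)$, and let $\Phi\colon C^G\to W$ be $\Phi(f)=f(1)$ and $\Psi\colon W^\Ga\to C$ be $\Psi(w)=c_w$, where $c_w$ is the $R$-linear extension of the function $G\to W$ that is constantly equal to $w$.

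First I would check that $\Phi$ and $\Psi$ take values in the claimed targets. For $w\in W^\Ga$ the function $c_w$ is right $R\Ga$-linear precisely because $w\gamma=w$ for $\gamma\in\Ga$, and it is $G$-invariant since $(c_w.g)(x)=c_w(gx)=w=c_w(x)$; hence $\Psi(w)\in C^G$. For $f\in C^G$ I would combine the two structures carried by $f$: right $R\Ga$-linearity gives $f(\gamma)=f(1\cdot\gamma)=f(1)\cdot\gamma$, while $G$-invariance gives $f(\gamma)=(f.\gamma)(1)=f(1)$; together these force $f(1)\cdot\gamma=f(1)$ for every $\gamma\in\Ga$, so $\Phi(f)=f(1)\in W^\Ga$. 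This interaction between the $\Ga$-action used to define the $\Hom$ and the ambient $G$-action is the one step with any content; everything else is formal.

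Next I would show $\Phi$ and $\Psi$ are inverse to one another. One composite is immediate, since $\Phi(\Psi(w))=c_w(1)=w$. For the other, given $f\in C^G$, $G$-invariance yields $f(x)=(f.x)(1)=f(1)$ for all $x\in G$, so $f$ coincides with the constant function $c_{f(1)}$, i.e.\ $\Psi(\Phi(f))=f$. Both maps are visibly $R$-linear. Finally, naturality in $W$ drops out of the formulas: a map $\phi\colon W\to W'$ of right $R\Ga$-modules induces $W^\Ga\to(W')^\Ga$ by restriction and $C\to\Hom_{R\Ga}(RG,W')$ by post-composition (a $G$-map, hence restricting to $G$-invariants), and the identities $\phi\circ c_w=c_{\phi(w)}$ and $\phi(f(1))=(\phi\circ f)(1)$ say exactly that the resulting square commutes. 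The only real obstacle is bookkeeping: keeping straight the several left and right actions of $\Ga$ and of $G$ on $RG$, on $W$, and on $C$, and making sure the evaluation-at-$1$ and constant-function recipes respect all of them.
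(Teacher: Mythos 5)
Your proof is correct and takes exactly the approach the paper itself offers as an alternative: the paper's proof cites Shapiro's Lemma but notes that ``the explicit formulas are easily seen to furnish inverse isomorphisms,'' which is precisely the verification you carry out. Your treatment of the key point---how right $R\Ga$-linearity and $G$-invariance of $f$ jointly force $f(1)\in W^\Ga$---and of naturality is careful and complete, so there is nothing to add.
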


\begin{proof}  This is a special case of Shapiro's Lemma; see \cite{Brown}.
Alternatively, the explicit formulas are easily seen to furnish inverse 
isomorphisms.
\end{proof}

Now let $\Ga \subseteq G$, and let $W$ be a right $RG$-module; then $W$ becomes a right $R\Ga$-module by restricting scalars to $R\Ga$.  Denote $W$ viewed as a right $\Ga$-module in this way by $\Res^G_{\Ga}W$.  We can then form the coinduced module $\Hom_{R\Ga}(RG,\Res^G_{\Ga}W)$ as above; it has a right $G$-module action given by $(f.g)(x)=f(g.x)$ for $g \in G$, $x \in RG$.  

We could also form the $R$-module of $R$-linear maps $\Hom_R(R[\Ga \bs G],W)$.  Since both $R[\Ga \bs G]$ and $W$ are equipped with right $G$-actions, we can endow $\Hom_R(R[\Ga \bs G],W)$ with a right $G$-action (the \emph{diagonal action}) that takes both the $G$-actions on the domain and on the codomain into account by defining $(f.g)(\Ga x)=f(\Ga xg^{-1})g$.

\begin{prop}\label{twoactinv}
There is a natural right $G$-module isomorphism 
$$\Hom_{R\Ga}(RG,\Res^G_{\Ga}W) \to \Hom_R(R[\Ga \bs G],W).$$  It is given
explicitly as follows.  If $f \in \Hom_R(R[\Ga \bs G],W)$, associate
the element $\hat f$ of $\Hom_{R\Ga}(RG,\Res^G_{\Ga}W)$ given by
$\hat{f}(x)=f(\Ga x^{-1})x$, for $x \in G$.  If $h$ is an element of
$\Hom_{R\Ga}(RG,\Res^G_{\Ga}W)$, define 
$\check h \in \Hom_R(R[\Ga \bs G],W)$ by $\check{h}(\Ga x)=h(x^{-1})x$.
\end{prop}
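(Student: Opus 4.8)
The plan is to verify directly that the two explicitly given assignments $f\mapsto\hat f$ and $h\mapsto\check h$ are well-defined, are mutually inverse, and intertwine the two right $G$-actions. First I would check that $\hat f$, defined by $\hat f(x)=f(\Ga x^{-1})x$ for $x\in G$ and then extended $R$-linearly to $RG$, actually lands in $\Hom_{R\Ga}(RG,\Res^G_\Ga W)$: given $\gamma\in\Ga$, one has $\hat f(\gamma x)=f(\Ga(\gamma x)^{-1})\gamma x=f(\Ga x^{-1}\gamma^{-1})\gamma x=f(\Ga x^{-1})\gamma x$, since $\Ga x^{-1}\gamma^{-1}=\Ga x^{-1}$ as right cosets; on the other hand $\bigl(\hat f(x)\bigr).\gamma=\bigl(f(\Ga x^{-1})x\bigr)\gamma=f(\Ga x^{-1})\gamma x$ only after one is careful about sides --- here $W$ is a \emph{right} module, and the relevant $\Ga$-linearity of an element of $\Hom_{R\Ga}(RG,-)$ is with respect to the \emph{left} $\Ga$-action on $RG$, i.e. $\hat f(\gamma\cdot x)=\hat f(x)$, so in fact no twist by $\gamma$ is needed on the $W$ side and the computation just reads $\hat f(\gamma x)=f(\Ga x^{-1})\gamma x$, which must equal $\hat f(x)=f(\Ga x^{-1})x$; this forces me to recheck the convention, and I expect the resolution is that one reads off the displayed formula with the coset written on the correct side. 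Symmetrically, $\check h(\Ga x)=h(x^{-1})x$ is well-defined on cosets because for $\gamma\in\Ga$, $\check h(\Ga\gamma x)$ should be computed using a representative of the coset $\Ga\gamma x=\Ga x$, e.g. $\check h(\Ga x)=h(x^{-1})x$ again, and independence of representative follows from the left $\Ga$-invariance $h(\gamma y)=h(y)$ built into $\Hom_{R\Ga}(RG,-)$.

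Next I would check the two composites are identities. Starting from $f\in\Hom_R(R[\Ga\bs G],W)$, form $\hat f$ and then $(\hat f)\check{}$: by definition $(\hat f)\check{}(\Ga x)=\hat f(x^{-1})x=\bigl(f(\Ga x)x^{-1}\bigr)x=f(\Ga x)$, so $(\hat f)\check{}=f$. Starting from $h\in\Hom_{R\Ga}(RG,\Res^G_\Ga W)$, form $\check h$ and then $(\check h)\hat{}$: $(\check h)\hat{}(x)=\check h(\Ga x^{-1})x=\bigl(h(x)x^{-1}\bigr)x=h(x)$ for $x\in G$, and since both sides are $R$-linear this gives $(\check h)\hat{}=h$ on all of $RG$. (These are the routine manipulations I would write out carefully but not belabor here; they hinge only on the cancellation $x^{-1}x=1$ and on extending from the basis $G$ by $R$-linearity.)

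Finally I would verify $G$-equivariance: the right $G$-action on $\Hom_R(R[\Ga\bs G],W)$ is the diagonal action $(f.g)(\Ga x)=f(\Ga xg^{-1})g$, and the right $G$-action on the coinduced module $\Hom_{R\Ga}(RG,-)$ is $(h.g)(x)=h(gx)$. I would compute $(f.g)\hat{}(x)=(f.g)(\Ga x^{-1})x=\bigl(f(\Ga x^{-1}g^{-1})g\bigr)x=f(\Ga(gx)^{-1})gx=\hat f(gx)=(\hat f.g)(x)$, using $(gx)^{-1}=x^{-1}g^{-1}$ and the right-module structure of $W$ to move $g$ past nothing problematically; this shows $\widehat{f.g}=\hat f.g$, and invertibility then gives equivariance of $\check{\ }$ as well. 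The only real obstacle I anticipate is bookkeeping of left-versus-right: $RG$ carries commuting left and right $G$-actions, $W$ is a right module, and $\Hom_{R\Ga}(RG,\Res^G_\Ga W)$ uses the left $\Ga$-action to define $\Ga$-linearity and the leftover right $G$-action on $RG$ to define its own right $G$-module structure --- keeping these straight, and matching them to the exact placement of inverses and cosets in the displayed formulas, is where care is required; once the conventions are pinned down, every step is a one-line cancellation.
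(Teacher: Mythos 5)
Your overall approach (direct verification that the two displayed formulas are well-defined, mutually inverse, and $G$-equivariant) matches the paper's, which dismisses the result as standard and ``easily verified from the explicit formulas.'' Your verifications of the two composites and of $G$-equivariance are correct. But the two well-definedness checks contain genuine errors, and you flag your own confusion without resolving it, so those steps do not yet work.

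The paper states the convention explicitly: $\Hom_{R\Ga}(RG,W)$ is the module of \emph{right} $R\Ga$-module maps, i.e.\ maps $h$ satisfying $h(x\gamma)=h(x)\gamma$ for $\gamma\in\Ga$; the left $G$-action on $RG$ by $g\cdot x=gx$ then furnishes the right $G$-action $(h.g)(x)=h(gx)$ on the Hom module. You instead test $\hat f(\gamma x)$ (left action) and in doing so make two false moves: $\Ga x^{-1}\gamma^{-1}$ is \emph{not} the same right coset as $\Ga x^{-1}$ (that would require $x^{-1}\gamma^{-1}x\in\Ga$), and $\bigl(f(\Ga x^{-1})x\bigr)\gamma = f(\Ga x^{-1})x\gamma$, not $f(\Ga x^{-1})\gamma x$. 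The correct computation is
$\hat f(x\gamma)=f(\Ga\gamma^{-1}x^{-1})x\gamma=f(\Ga x^{-1})x\gamma=\hat f(x)\gamma$,
where $\Ga\gamma^{-1}x^{-1}=\Ga x^{-1}$ because $\gamma^{-1}\in\Ga$ is absorbed on the left of the coset symbol. Similarly, your justification that $\check h$ is well defined invokes a spurious ``left $\Ga$-invariance $h(\gamma y)=h(y)$''; the correct argument uses right $\Ga$-equivariance: writing $y=\gamma x$, one has $\check h(\Ga y)=h(x^{-1}\gamma^{-1})\gamma x=h(x^{-1})\gamma^{-1}\gamma x=h(x^{-1})x=\check h(\Ga x)$. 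Once the convention is pinned down as the paper states it, both well-definedness checks are one-line cancellations just like the rest of your argument; as written, however, they contain false equalities and an unresolved ``I expect the resolution is\dots,'' which is a real gap.
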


\begin{proof}
This result too is standard, and is easily verified from the explicit formulas.
\end{proof}

\begin{cor}\label{twoactinvG}
There is a natural isomorphism $$\Hom_{R\Ga}(RG,\Res^G_{\Ga}W)^G \to
\Hom_R(R[\Ga \bs G],W)^G$$.
\end{cor}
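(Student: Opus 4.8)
The plan is to simply restrict the isomorphism of Proposition~\ref{twoactinv} to $G$-invariants. Since that proposition provides a \emph{right $G$-module} isomorphism $\Phi\colon\Hom_{R\Ga}(RG,\Res^G_{\Ga}W)\to\Hom_R(R[\Ga\bs G],W)$, it carries the submodule of $G$-invariants of the domain bijectively onto the submodule of $G$-invariants of the codomain: indeed, any morphism of right $G$-modules sends $G$-fixed points to $G$-fixed points, so $\Phi$ restricts to a map on invariants; its inverse $\Phi^{-1}$ is likewise $G$-equivariant (the inverse of a $G$-module isomorphism is a $G$-module isomorphism), so $\Phi^{-1}$ also restricts to a map on invariants; and these restrictions are mutually inverse. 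Naturality of the restricted map follows from the naturality already asserted in Proposition~\ref{twoactinv}, since restriction to the invariants subfunctor is compatible with $G$-maps $W\to W'$.

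In slightly more detail, the first step is to recall the explicit formulas: for $f\in\Hom_R(R[\Ga\bs G],W)$ one sets $\hat f(x)=f(\Ga x^{-1})x$, and for $h\in\Hom_{R\Ga}(RG,\Res^G_{\Ga}W)$ one sets $\check h(\Ga x)=h(x^{-1})x$. The second step is to observe that if $h$ is $G$-invariant, i.e. $h.g=h$ for all $g\in G$ under the coinduced-module action $(h.g)(x)=h(g.x)$, then $\check h$ is $G$-invariant under the diagonal action $(f.g)(\Ga x)=f(\Ga xg^{-1})g$; and conversely. This is a one-line unwinding of definitions, but it is already subsumed by the statement that $\Phi$ (hence $\Phi^{-1}$) is a $G$-module map, so no separate verification is needed — one just invokes Proposition~\ref{twoactinv}.

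There is really no substantive obstacle here: the corollary is a formal consequence of Proposition~\ref{twoactinv} together with the general fact that an isomorphism of $RG$-modules induces an isomorphism on $G$-invariants (the invariants functor $(-)^G=\Hom_{RG}(R,-)$ is a functor, hence preserves isomorphisms). If anything the only point requiring a moment's care is the bookkeeping convention that both modules in Proposition~\ref{twoactinv} are being regarded as \emph{right} $G$-modules with the actions specified there, so that "$G$-invariants" means the same thing on both sides; once that is noted the proof is immediate.
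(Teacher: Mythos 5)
Your proof is correct and follows exactly the same route as the paper: both simply observe that the isomorphism of Proposition~\ref{twoactinv} is a $G$-module isomorphism and therefore restricts to an isomorphism on $G$-invariants. Your write-up is just a more detailed unwinding of the same one-line argument.
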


\begin{proof}
Since the isomorphism of Proposition \ref{twoactinv} is a $G$-isomorphism, we
need only pass to $G$-invariants on both sides.
\end{proof}

\begin{prop}\label{compinv}
Let $\Ga$ be a subgroup of $G$, $W$ a right $G$-module.  Then there is a natural isomorphism 
$$\varphi_{\Ga}:W^\Ga \to \Hom_R(R[\Ga \bs G],W)^G$$ given as follows.  To a 
$\Ga$-invariant $w \in W^{\Ga}$, associate the mapping 
$\varphi_{\Ga}(w)=f_w \in \Hom_R(R[\Ga \bs G],W)^G$ given by
$f_w(g)=w.g=w.\Ga g$ (see Remark \ref{mean}), for $g \in G$.  
To a $G$-invariant map 
$f \in \Hom_R(R[\Ga \bs G],W)^G$, associate the element $f(\Ga)$ obtained by evaluating $f$ on the trivial right coset $\Ga$ itself.
\end{prop}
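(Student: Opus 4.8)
The plan is to realize $\varphi_\Ga$ as the composite of two isomorphisms already in hand, and then simply to unwind that composite in order to recover the stated explicit formulas. Indeed, \pref{Shapiroinv} furnishes a natural $R$-linear isomorphism $W^\Ga\xrightarrow{\sim}\Hom_{R\Ga}(RG,\Res^G_\Ga W)^G$, while \cref{twoactinvG} furnishes a natural isomorphism $\Hom_{R\Ga}(RG,\Res^G_\Ga W)^G\xrightarrow{\sim}\Hom_R(R[\Ga\bs G],W)^G$; composing them gives the desired isomorphism, and its naturality in $W$ is inherited from that of the two factors.

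The only substantive step is to check that this composite and its inverse agree with the formulas in the statement. First I would push a $\Ga$-invariant $w\in W^\Ga$ forward: by \pref{Shapiroinv} it becomes the constant function $c_w\in\Hom_{R\Ga}(RG,W)$, and then under the assignment $h\mapsto\check h$ of \pref{twoactinv} it becomes the function sending $\Ga x$ to $c_w(x^{-1})x=w.x$, which is exactly $f_w$. Conversely, starting from a $G$-invariant $f\in\Hom_R(R[\Ga\bs G],W)^G$ and running the isomorphisms backwards, the assignment $f\mapsto\hat f$ produces $x\mapsto f(\Ga x^{-1})x$, and evaluating at $x=1$ (the inverse half of Shapiro's isomorphism) yields $f(\Ga)$; this is the claimed formula for $\varphi_\Ga^{-1}$.

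Alternatively, one may bypass the intermediate modules and verify everything directly; this is pure bookkeeping. Well-definedness of $f_w$ --- that $f_w(\Ga x)$ depends only on the coset $\Ga x$ --- is exactly the observation recorded in \rref{mean}. The $G$-invariance of $f_w$ for the diagonal action is the one-line computation $(f_w.g)(\Ga x)=f_w(\Ga xg^{-1})g=(w.(xg^{-1}))g=w.x=f_w(\Ga x)$, and $R$-linearity of $w\mapsto f_w$ is immediate. That the two assignments are mutually inverse follows from $f_w(\Ga)=w.\Ga=w$ together with the identity $f(\Ga x)=f(\Ga).x$ valid for any $G$-invariant $f$ (specialize the invariance relation $f(\Ga xg^{-1})g=f(\Ga x)$ at $x=1$ and relabel), whence $f_{f(\Ga)}(\Ga x)=f(\Ga).x=f(\Ga x)$.

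I do not expect a genuine obstacle here: the proposition is just a concrete repackaging of Shapiro's Lemma. The only point demanding a little care is keeping the three actions straight --- the given right $G$-action on $W$, the right $G$-action on $R[\Ga\bs G]$, and the diagonal right $G$-action on $\Hom_R(R[\Ga\bs G],W)$ --- and confirming, as in \rref{mean}, that the expression $w.\Ga x$ is unambiguous when $w$ is $\Ga$-invariant.
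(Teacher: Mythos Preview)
Your proposal is correct and follows exactly the approach indicated in the paper: the paper's proof simply says to compose the isomorphisms of \pref{Shapiroinv} and \cref{twoactinvG}, or alternatively to verify the explicit formulas directly, and you have carried out both of these in detail. Your tracing of the composite to recover the formulas $f_w$ and $f\mapsto f(\Ga)$, and your direct verification via the diagonal-action invariance identity $f(\Ga x)=f(\Ga).x$, are accurate and more explicit than the paper's terse one-line proof.
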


\begin{proof}
One merely composes the isomorphisms of Proposition \ref{Shapiroinv} and
Corollary \ref{twoactinvG}.  Or, one can verify this directly from the explicit
formula for $\varphi_{\Ga}$ and its inverse.
\end{proof}

\begin{thm}\label{transinv}
(Transplantation of invariants).  Let $\Ga_1$ and $\Ga_2$ be subgroups of a group
$G$, and let $W$ be a right $RG$-module.  Suppose that there is a map of right $G$-modules $\tau:R[\Ga_1 \bs G] \to R[\Ga_2 \bs G]$.  Then there is a (contravariant)
induced map $\tau^{\sharp }:W^{\Ga_2} \to W^{\Ga_1}$ on invariants.  It is given explicitly by: for $w \in W^{\Ga_2}$, 

\begin{equation} \label{transinvf}
\tau^{\sharp }(w)=w.\tau(\Ga_1). \end{equation}
\end{thm}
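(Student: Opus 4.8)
The plan is to obtain $\tau^\sharp$ by transporting the map $\tau$ across the natural isomorphism $\varphi_\Ga$ of \pref{compinv}, which identifies the module of invariants $W^\Ga$ with $\Hom_R(R[\Ga\bs G],W)^G$. Explicitly, given the $G$-map $\tau:R[\Ga_1\bs G]\to R[\Ga_2\bs G]$, precomposition with $\tau$ induces an $R$-linear map $\tau^*:\Hom_R(R[\Ga_2\bs G],W)\to\Hom_R(R[\Ga_1\bs G],W)$, $f\mapsto f\circ\tau$. Since $\tau$ is $G$-equivariant and the $G$-action on these Hom-modules is the diagonal action (built from the actions on domain and codomain of $\tau$), $\tau^*$ carries $G$-invariant maps to $G$-invariant maps, hence restricts to $\tau^*:\Hom_R(R[\Ga_2\bs G],W)^G\to\Hom_R(R[\Ga_1\bs G],W)^G$. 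One then simply defines $\tau^\sharp=\varphi_{\Ga_1}^{-1}\circ\tau^*\circ\varphi_{\Ga_2}:W^{\Ga_2}\to W^{\Ga_1}$. Functoriality (contravariance) in $\tau$ is immediate, since $\varphi_\Ga$ is natural and $(-)\circ\tau$ is contravariantly functorial.

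The remaining task is to unwind the definitions and check that this abstract map agrees with the explicit formula \eqref{transinvf}. Starting from $w\in W^{\Ga_2}$, the isomorphism $\varphi_{\Ga_2}$ of \pref{compinv} produces $f_w\in\Hom_R(R[\Ga_2\bs G],W)^G$ with $f_w(\Ga_2 g)=w.g$. Applying $\tau^*$ gives the map $f_w\circ\tau\in\Hom_R(R[\Ga_1\bs G],W)^G$. Finally, applying $\varphi_{\Ga_1}^{-1}$ — which, by \pref{compinv}, is evaluation on the trivial coset $\Ga_1$ — yields $(f_w\circ\tau)(\Ga_1)=f_w(\tau(\Ga_1))$. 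Since $\tau(\Ga_1)$ is an $R$-linear combination of cosets $\Ga_2 g$ and $f_w$ is $R$-linear with $f_w(\Ga_2 g)=w.g=w.\Ga_2 g$, this equals $w.\tau(\Ga_1)$, where the right-hand side is interpreted via $R$-linear extension of the well-defined action $w.\Ga_2 g$ described in \rref{mean}. This is exactly \eqref{transinvf}.

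Alternatively — and this is the route the excerpt explicitly invites the impatient reader to take — one can bypass \pref{compinv} entirely and verify directly that the formula $w\mapsto w.\tau(\Ga_1)$ does what is claimed: first, that $w.\tau(\Ga_1)$ lands in $W^{\Ga_1}$ (because $\tau(\Ga_1)\in R[\Ga_2\bs G]$ is $\Ga_1$-invariant, as $\tau$ is a $G$-map and $\Ga_1$ is the stabilizer of the coset $\Ga_1\in R[\Ga_1\bs G]$; hence $w.\tau(\Ga_1).\gamma = w.\tau(\Ga_1)$ for all $\gamma\in\Ga_1$); second, that $w\mapsto w.\tau(\Ga_1)$ is $R$-linear, which is clear; and third, that $\tau\mapsto\tau^\sharp$ is contravariantly functorial in $\tau$, i.e. $(\tau'\circ\tau)^\sharp=\tau^\sharp\circ(\tau')^\sharp$ and $\Id^\sharp=\Id$, which follows by a short computation using $G$-equivariance of the maps involved. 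Both routes are essentially bookkeeping.

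The main obstacle, such as it is, is purely notational: one must keep straight the several different $G$-actions in play — the given right action on $W$, the right action on the coset modules $R[\Ga_i\bs G]$, the diagonal action on the Hom-modules $\Hom_R(R[\Ga_i\bs G],W)$, and (if one uses the Shapiro-style route through \pref{Shapiroinv} and \cref{twoactinvG}) the right action on the coinduced module — and verify that $\tau^*$ really is equivariant for the diagonal actions, which amounts to the identity $(f\circ\tau).g = (f.g)\circ\tau$ for $f\in\Hom_R(R[\Ga_2\bs G],W)$ and $g\in G$. Granting that, there is no real difficulty: the map $\tau^\sharp$ exists because $\varphi_\Ga$ is natural, and it is given by \eqref{transinvf} because the two ingredients of $\varphi_\Ga^{-1}$ and $\varphi_\Ga$ are both completely explicit.
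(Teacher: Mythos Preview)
Your proof is correct and follows essentially the same approach as the paper: define $\tau^\sharp=\varphi_{\Ga_1}^{-1}\circ\tau^*\circ\varphi_{\Ga_2}$ using \pref{compinv}, then unwind to obtain the explicit formula $w.\tau(\Ga_1)$. You include a bit more justification (e.g., checking that $\tau^*$ is a $G$-map for the diagonal actions, and spelling out the direct-verification alternative the paper only alludes to), but the argument is the same.
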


Note that this makes sense: $\tau(\Ga_1)$ is an $R$-linear combination of right
cosets of $\Ga_2$, and by Remark \ref{mean}, a right coset of $\Ga_2$
operating on the right of a $\Ga_2$-invariant element is meaningful.  It is
manifestly a $\Ga_1$-invariant element.

\begin{proof}
Since the functor $\Hom(\cdot,\cdot)$ is contravariant in its first argument, the given map $\tau:R[\Ga_1 \bs G] \to R[\Ga_2 \bs G]$ induces a $G$-map 
$\tau^{*}:\Hom_R(R[\Ga_2 \bs G],W) \to \Hom_R(R[\Ga_1 \bs G],W)$ given by
$\tau^{*}(f)=f \circ \tau$, hence by restriction a map
$\tau^{*}:\Hom_R(R[\Ga_2 \bs G],W)^G \to \Hom_R(R[\Ga_1 \bs G],W)^G$ on
$G$-invariants.  Then the composite 
$\varphi_{\Ga_1}^{-1}\circ\tau^{*}\circ\varphi_{\Ga_2}^{}$ is the desired induced map 
$\tau^{\sharp }$:

$$\xymatrix{
{W^{\Ga_2}}\ar^{\tau^{\sharp}}[r]\ar_{\varphi_{\Ga_2}}^{\sim}[d]&{W^{\Ga_1}}\ar^{\varphi_{\Ga_1}}_{\sim}[d]\\
{\Hom_R(R[\Ga_2 \bs G],W)^G}\ar_{\tau^*}[r]&{\Hom_R(R[\Ga_1 \bs G],W)^G}
}$$
To see that $\tau^{\sharp }$ is given by the formula \eqref{transinvf}, note that by
the formulas in Proposition \ref{compinv}, for $w \in W^{\Ga_2}$, we have that
$\varphi_{\Ga_1}^{-1}\circ\tau^{*}\circ\varphi_{\Ga_2}^{}(w)=
\varphi_{\Ga_1}^{-1}(\tau^{*}(\varphi_{\Ga_2}^{}(w)))=
\varphi_{\Ga_1}^{-1}(\varphi_{\Ga_2}^{}(w)\circ\tau)=
(\varphi_{\Ga_2}(w)\circ\tau)(\Ga_1)=(\varphi_{\Ga_2}(w))(\tau(\Ga_1))=
w.\tau(\Ga_1)$.
\end{proof}

\begin{rem} 

It is clear from its construction that the mapping is contravariantly functorial; i.e., if $\tau:R[\Ga_1 \bs G]\to R[\Ga_2 \bs G]$ and $\sigma:R[\Ga_2 \bs G]\to R[\Ga_3 \bs G]$ are right $G$-maps and $W$ is a right $G$-module, then 
$(\sigma\circ\tau)^{\sharp }=\tau^{\sharp }\circ\sigma^{\sharp }:W^{\Ga_3}\to W^{\Ga_1}$.
\end{rem}

\begin{prop}\label{compatinv}
Let $\Ga_1$, $\Ga_2$ be subgroups of $G$, let $W$, $X$ be right $G$-modules, and
let $\psi:W \to X$ be a right $G$-module map.  Suppose that $\tau:R[\Ga_1 \bs G] \to R[\Ga_2 \bs G]$ is a right $G$-module map.  Then the transplantations $\tau_W^{\sharp }:W^{\Ga_2}\to W^{\Ga_1}$ and $\tau_X^{\sharp }:X^{\Ga_2}\to X^{\Ga_1}$ commute with $\psi$, i.e., the diagram commutes:

$$\xymatrix{
{W^{\Ga_2}}\ar_{\psi}[d]\ar^{\tau_W^{\sharp}}[r]&{W^{\Ga_1}}\ar^{\psi}[d]\\
{X^{\Ga_2}}\ar_{\tau_X^{\sharp}}[r]&{X^{\Ga_1}}
}$$
\end{prop}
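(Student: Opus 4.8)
The plan is to prove this purely diagrammatically, by exploiting the naturality of each of the isomorphisms that were assembled to build the transplantation map $\tau^{\sharp}$ in Theorem~\ref{transinv}. Recall that $\tau_W^{\sharp}$ was defined as the composite $\varphi_{\Ga_1}^{-1}\circ\tau^*\circ\varphi_{\Ga_2}$, where $\varphi_{\Ga}\colon W^{\Ga}\to\Hom_R(R[\Ga\bs G],W)^G$ is the isomorphism of Proposition~\ref{compinv} and $\tau^*$ is precomposition with $\tau$. So it suffices to check that a right $G$-map $\psi\colon W\to X$ is compatible with each of these three ingredients, and then paste the resulting squares together.

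First I would observe that $\psi$ induces, by postcomposition, a map $\psi_*\colon\Hom_R(R[\Ga\bs G],W)^G\to\Hom_R(R[\Ga\bs G],X)^G$ (it is well-defined on $G$-invariants because $\psi$ is a $G$-map, so it carries the diagonal $G$-action on the domain Hom-space to that on the codomain Hom-space). Then there are three squares to verify: (i) the square relating $\varphi_{\Ga_2}$ on $W$ and on $X$ via $\psi$ and $\psi_*$; (ii) the square relating $\tau^*$ on $W$ and on $X$ via $\psi_*$; and (iii) the square relating $\varphi_{\Ga_1}^{-1}$ on $W$ and on $X$ via $\psi_*$ and $\psi$. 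Square (ii) is immediate: $\psi_*\circ\tau^* = (\psi\circ(-)\circ\tau) = \tau^*\circ\psi_*$, since postcomposition with $\psi$ and precomposition with $\tau$ act on disjoint ``sides'' of a Hom-space and therefore commute. Squares (i) and (iii) are two formulations of the same fact, namely that $\varphi_{\Ga}$ is natural in $W$; using the explicit formula $\varphi_{\Ga}(w)(g)=w.g$ from Proposition~\ref{compinv}, one has $\psi_*(\varphi_{\Ga}^W(w))(g)=\psi(w.g)=\psi(w).g=\varphi_{\Ga}^X(\psi(w))(g)$, where the middle equality is exactly the $G$-equivariance of $\psi$. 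Passing to inverses gives square (iii). Pasting (i), (ii), (iii) yields $\psi\circ\tau_W^{\sharp}=\psi\circ\varphi_{\Ga_1}^{-1}\circ\tau^*\circ\varphi_{\Ga_2}=\varphi_{\Ga_1}^{-1}\circ\psi_*\circ\tau^*\circ\varphi_{\Ga_2}=\varphi_{\Ga_1}^{-1}\circ\tau^*\circ\psi_*\circ\varphi_{\Ga_2}=\varphi_{\Ga_1}^{-1}\circ\tau^*\circ\varphi_{\Ga_2}\circ\psi=\tau_X^{\sharp}\circ\psi$, which is the claim.

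Alternatively — and this is perhaps the cleanest route, given the remark in the excerpt that one may simply verify the explicit transplantation formulas directly — I would bypass the diagram chase entirely and use formula~\eqref{transinvf}: for $w\in W^{\Ga_2}$ one has $\psi(\tau_W^{\sharp}(w))=\psi(w.\tau(\Ga_1))$. Since $\tau(\Ga_1)=\sum_j c_j\,\Ga_2 x_j$ is an $R$-linear combination of right cosets of $\Ga_2$ and $\psi$ is $R$-linear and $G$-equivariant (hence commutes with the right action of each coset representative, this being well-defined on $\psi(w)\in X^{\Ga_2}$ by Remark~\ref{mean}), we get $\psi(w.\tau(\Ga_1))=\sum_j c_j\,\psi(w.\Ga_2 x_j)=\sum_j c_j\,\psi(w).\Ga_2 x_j=\psi(w).\tau(\Ga_1)=\tau_X^{\sharp}(\psi(w))$.

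There is no real obstacle here; the only mild subtlety, in either approach, is the bookkeeping needed to see that $\psi$ sends $W^{\Ga_2}$ into $X^{\Ga_2}$ (clear, since $\psi$ is a $G$-map) so that the right-coset action in formula~\eqref{transinvf} is legitimately applicable to $\psi(w)$ in the sense of Remark~\ref{mean}, and symmetrically that postcomposition with $\psi$ preserves $G$-invariance of maps out of $R[\Ga\bs G]$. Both are formal consequences of $G$-equivariance of $\psi$.
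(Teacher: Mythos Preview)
Your proposal is correct. The paper's own proof is the one-line remark ``This is immediate from Formula~\eqref{transinvf},'' which is exactly your second (``alternatively'') approach; your first, diagrammatic argument is also valid but is more machinery than the paper deploys for this step.
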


\begin{proof}
This is immediate from Formula \eqref{transinvf}.
\end{proof}

For an exposition of the use of Sunada's theorem in the
construction of isospectral manifolds, see \cite{Brooks}, \cite{Buser},
\cite{Gordon}, or \cite{Sunada}.

\subsection{Transplantation of coinvariants}\label{transcoinvsec}
Next, we turn to the formula for transplantation of coinvariants dual to that for
transplantation of invariants.  Again the formula is based upon two well known
isomorphisms.

Let $G$ be a group, let $\Ga\subseteq G$ be a subgroup, and $V$ be a left $R\Ga$-module.  One can associate naturally to $V$ a left $RG$-module by 
$RG \otimes_{R\Ga} V$; this is often called the \emph{induced module}.  The
$G$-module action is given by $g.(x\otimes v)=(gx)\otimes v$.

\begin{prop} \label{shapcoinv}
There is a natural isomorphism $$V_\Ga \to (RG \otimes_{R\Ga} V)_G$$ of the
$\Ga$-coinvariants of $V$ with the $G$-coinvariants of the induced module.  The
isomorphism is given explicitly as follows: to the residue class
$\overline{v} \in V_{\Ga}$, associate the element $\overline{1 \otimes v} \in (RG \otimes_{R\Ga} V)_G$.  Its inverse sends an element $\overline{x \otimes v} \in (RG \otimes_{R\Ga} V)_G$ to $\overline{v} \in V_{\Ga}$.
\end{prop}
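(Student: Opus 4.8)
The plan is to verify directly that the two displayed formulas define mutually inverse $R$-linear maps; since this is a special case of Shapiro's Lemma (cf. \cite{Brown}), the point is simply to record the self-contained check, exactly parallel to the proof of the dual statement Proposition~\ref{Shapiroinv}. Write $V_\Ga=V/S$, where $S$ is the $R$-submodule of $V$ generated by the elements $\gamma.u-u$ ($\gamma\in\Ga$, $u\in V$), and write $(RG\ot_{R\Ga}V)_G=(RG\ot_{R\Ga}V)/T$, where $T$ is generated by the elements $g.w-w$ ($g\in G$, $w\in RG\ot_{R\Ga}V$).

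First I would check that $\alpha\colon V_\Ga\to (RG\ot_{R\Ga}V)_G$, $\overline v\mapsto\overline{1\ot v}$, is well defined. It suffices to see that each generator $\gamma.u-u$ of $S$ is sent into $T$. This is immediate from the balancing relation in the induced module: $1\ot\gamma u=\gamma\ot u=\gamma.(1\ot u)$, so $1\ot(\gamma u-u)=\gamma.(1\ot u)-(1\ot u)\in T$, whence $\overline{1\ot\gamma u}=\overline{1\ot u}$.

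Next I would construct the inverse $\beta$. The assignment $(x,v)\mapsto\overline v$ on basis elements $x\in G$ of $RG$, extended $R$-bilinearly, is balanced over $R\Ga$: replacing $x$ by $x\gamma$ leaves the value $\overline v$ unchanged, while replacing $v$ by $\gamma v$ gives $\overline{\gamma v}=\overline v$ in $V_\Ga$. Hence it descends to an $R$-linear map $\beta_0\colon RG\ot_{R\Ga}V\to V_\Ga$ with $\beta_0(x\ot v)=\overline v$. Since $\beta_0(gx\ot v)=\overline v=\beta_0(x\ot v)$, the map $\beta_0$ kills $T$ and therefore factors through the desired $\beta\colon (RG\ot_{R\Ga}V)_G\to V_\Ga$.

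Finally I would confirm that $\alpha$ and $\beta$ are mutually inverse and natural. One composite is immediate: $\beta(\alpha(\overline v))=\beta(\overline{1\ot v})=\overline v$. For the other, observe that $x\ot v=x.(1\ot v)$, so $\overline{x\ot v}=\overline{1\ot v}$ in the $G$-coinvariants; hence $\alpha(\beta(\overline{x\ot v}))=\alpha(\overline v)=\overline{1\ot v}=\overline{x\ot v}$, and $R$-linearity extends this to all of $(RG\ot_{R\Ga}V)_G$. Naturality with respect to a left $R\Ga$-module map $V\to V'$ is visible from the formula $\overline v\mapsto\overline{1\ot v}$. I do not expect any genuine obstacle here; the only thing requiring care is keeping straight which set of relations (the $R\Ga$-balancing of the tensor product versus the relations defining $G$-coinvariants) is being invoked at each step, since both have the superficially similar form ``a group element may be moved across.''
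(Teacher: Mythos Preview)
Your proof is correct and takes essentially the same approach as the paper: the paper simply cites this as a special case of Shapiro's Lemma \cite{Brown}, and your argument is exactly the direct verification of the explicit formulas that the paper (as in its proof of Proposition~\ref{Shapiroinv}) indicates can be carried out.
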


\begin{proof}
Again, this is a special case of Shapiro's Lemma \cite{Brown}. 
\end{proof}

Now let $V$ be a left $RG$-module.  Form the $R$-module 
$R[\Ga \bs G]\otimes_R V$.  Here $G$ acts on the right on
$R[\Ga \bs G]$, hence on the left on $R[\Ga \bs G]$ by 
$g.(\Ga x)=\Ga xg^{-1}$.  One can then endow $R[\Ga \bs G]\otimes_R V$ with the diagonal left $G$-action by 
$g.(\Ga x \otimes v)=
g.(\Ga x) \otimes g.v=\Ga xg^{-1}\otimes gv$.  

One can also restrict the action of $G$ to a left action of $\Ga$ on $V$, then
form the induced module $RG \otimes_{R\Ga}\Res^G_{\Ga} V$.  As in the case of
invariants (Proposition \ref{twoactinv}), these two modules are isomorphic:

\begin{prop} \label{twoactcoinv}
There is a natural isomorphism of left $RG$-modules
$$RG \otimes_{R\Ga}\Res^G_{\Ga} V \to R[\Ga \bs G]\otimes_R V.$$
It is given as follows.  To 
$x \otimes v \in RG \otimes_{R\Ga}\Res^G_{\Ga} V$, ($x \in G$), associate the
element
$x(\Ga \otimes v)=\Ga x^{-1}\otimes xv \in R[\Ga \bs G]\otimes_R V$.  To
the element $\Ga x \otimes v \in R[\Ga \bs G]\otimes_R V$ ($x \in G$), the
inverse map associates the element $x^{-1}\otimes xv \in RG
\otimes_{R\Ga}\Res^G_{\Ga} V$.
\end{prop}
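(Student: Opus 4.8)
The plan is to verify directly that the two explicit assignments in the statement define mutually inverse $R$-linear maps which intertwine the stated left $G$-actions, exactly as in Proposition~\ref{twoactinv}, of which this is the coinvariant analogue. Write $\Phi\colon RG\otimes_{R\Ga}\Res^G_\Ga V\to R[\Ga\bs G]\otimes_R V$ for the forward assignment $x\otimes v\mapsto \Ga x^{-1}\otimes xv$ ($x\in G$), and $\Psi$ for the backward one $\Ga x\otimes v\mapsto x^{-1}\otimes xv$. There are essentially four things to check: that $\Phi$ is well defined (i.e.\ respects the balancing relation defining $\otimes_{R\Ga}$), that $\Psi$ is well defined (i.e.\ is independent of the choice of coset representative), that $\Phi$ is $G$-equivariant, and that $\Phi$ and $\Psi$ are mutually inverse; naturality in $V$ then follows at once.

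For $\Phi$: the rule $x\otimes v\mapsto\Ga x^{-1}\otimes xv$ is plainly biadditive and $R$-bilinear, so the only point is whether it descends through the relation $x\gamma\otimes v=x\otimes\gamma v$ for $\gamma\in\Ga$. One computes $\Ga(x\gamma)^{-1}\otimes(x\gamma)v=\Ga\gamma^{-1}x^{-1}\otimes x\gamma v=\Ga x^{-1}\otimes x\gamma v$, using $\Ga\gamma^{-1}=\Ga$, and this is exactly the image of $x\otimes\gamma v$; hence $\Phi$ factors through $RG\otimes_{R\Ga}\Res^G_\Ga V$ and is $R$-linear. For $\Psi$: the rule $\Ga x\otimes v\mapsto x^{-1}\otimes xv$ is $R$-linear in $v$, so I need only check independence of the choice of $x$ within the coset $\Ga x$. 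If $x'=\gamma x$ with $\gamma\in\Ga$, then $x'^{-1}\otimes x'v=x^{-1}\gamma^{-1}\otimes\gamma(xv)=x^{-1}\otimes\gamma^{-1}\gamma(xv)=x^{-1}\otimes xv$ in $RG\otimes_{R\Ga}\Res^G_\Ga V$, the middle equality being an instance of the $R\Ga$-balancing. So $\Psi$ is a well-defined $R$-linear map.

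It remains to check equivariance and invertibility on the generators $x\otimes v$ and $\Ga x\otimes v$. Recalling that the left action on the induced module is $g.(x\otimes v)=gx\otimes v$ while the diagonal action on $R[\Ga\bs G]\otimes_R V$ is $g.(\Ga x\otimes v)=\Ga xg^{-1}\otimes gv$, one gets $\Phi(g.(x\otimes v))=\Phi(gx\otimes v)=\Ga(gx)^{-1}\otimes(gx)v=\Ga x^{-1}g^{-1}\otimes gxv$, which equals $g.(\Ga x^{-1}\otimes xv)=g.\Phi(x\otimes v)$; so $\Phi$ is $G$-linear. Finally $\Psi\Phi(x\otimes v)=\Psi(\Ga x^{-1}\otimes xv)=x\otimes x^{-1}(xv)=x\otimes v$ and $\Phi\Psi(\Ga x\otimes v)=\Phi(x^{-1}\otimes xv)=\Ga x\otimes x^{-1}(xv)=\Ga x\otimes v$, so $\Phi$ and $\Psi$ are inverse to one another. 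Naturality is immediate: a map $\psi\colon V\to V'$ of left $RG$-modules obviously commutes with $\Phi$ and $\Phi'$ at the level of the generating expressions $x\otimes v$.

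There is no real obstacle here: the proposition is a formal unwinding of the definitions, and the only thing to watch is the bookkeeping of inverses together with the passage to the quotient by the $R\Ga$-relations on one side and to coset representatives on the other. (Conceptually one can also obtain the isomorphism from the ``tensor identity'' $RG\otimes_{R\Ga}(\Res^G_\Ga V)\cong(RG\otimes_{R\Ga}R)\otimes_R V$, with $R$ the trivial $R\Ga$-module, combined with the identification $RG\otimes_{R\Ga}R\cong R[\Ga\bs G]$ of left $G$-modules; composing these recovers precisely the map $\Phi$ above. The direct verification is shorter, however, and keeps the explicit formulas in view, which is what is needed for the transplantation formula for coinvariants that follows.)
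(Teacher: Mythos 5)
Your proposal is correct and is exactly the ``straightforward verification'' the paper invokes without writing out: you check well-definedness of both assignments (via the $R\Ga$-balancing relation on one side and independence of coset representative on the other), $G$-equivariance against the diagonal action, mutual inversion, and naturality. The parenthetical remark deriving the map conceptually from the tensor identity $RG\otimes_{R\Ga}R\cong R[\Ga\bs G]$ is a nice alternative viewpoint, but as you note, the direct computation is what matches the paper's emphasis on explicit formulas.
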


\begin{proof}
This is a straightforward verification.
\end{proof}

Passing to $G$-coinvariants yields
\begin{cor} \label{twoactcoinvG}
There is a natural isomorphism
$$(RG \otimes_{R\Ga}\Res^G_{\Ga} V)_G \to (R[\Ga \bs G]\otimes_R V)_G.$$
\end{cor}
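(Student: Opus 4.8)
The plan is to obtain the corollary simply by applying the $G$-coinvariants functor to the $RG$-module isomorphism already established in Proposition~\ref{twoactcoinv}. First I would observe that for a group $G$ and a commutative ring $R$, the assignment $V\mapsto V_G$ is a covariant functor from left $RG$-modules to $R$-modules: any $G$-map $\phi\colon V\to V'$ carries the $R$-submodule of $V$ generated by $\{g.v-v : g\in G,\ v\in V\}$ into the corresponding submodule of $V'$, hence descends to an $R$-linear map $\phi_G\colon V_G\to V'_G$, and this passage is compatible with composition and identities. (Equivalently, $V_G\cong R\otimes_{RG}V$ with $G$ acting trivially on $R$, so $(-)_G$ is a tensor functor.) In particular $(-)_G$ carries isomorphisms to isomorphisms.

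Next I would simply invoke Proposition~\ref{twoactcoinv}, which provides a natural isomorphism of left $RG$-modules $RG\otimes_{R\Ga}\Res^G_\Ga V\to R[\Ga\bs G]\otimes_R V$, and apply the functor $(-)_G$ to it. This at once yields the asserted isomorphism $(RG\otimes_{R\Ga}\Res^G_\Ga V)_G\to(R[\Ga\bs G]\otimes_R V)_G$, with naturality in $V$ inherited from the naturality in Proposition~\ref{twoactcoinv}. This mirrors exactly the argument used for Corollary~\ref{twoactinvG}, where one passed to $G$-invariants rather than to $G$-coinvariants.

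There is no genuine obstacle here; the only point worth recording, should it be needed later, is the formula on representatives obtained by tracking the map of Proposition~\ref{twoactcoinv} through the quotient: the class $\overline{x\otimes v}$ (for $x\in G$) maps to $\overline{\Ga x^{-1}\otimes xv}$, and the inverse sends $\overline{\Ga x\otimes v}$ to $\overline{x^{-1}\otimes xv}$. Composing further with the Shapiro isomorphism of Proposition~\ref{shapcoinv} then re-expresses both sides in terms of the $\Ga$-coinvariants $V_\Ga$, which is presumably the form in which the corollary will feed into the explicit transplantation formula for coinvariants (the analogue of Theorem~\ref{transinv}).
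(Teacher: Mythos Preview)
Your proof is correct and follows exactly the paper's approach: the paper simply notes ``Passing to $G$-coinvariants yields'' the corollary, i.e., applies the coinvariants functor to the $RG$-module isomorphism of Proposition~\ref{twoactcoinv}. Your added remarks on functoriality and the explicit formulas on representatives are accurate elaborations but not needed for the corollary itself.
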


As in the case of invariants, one can compose the isomorphisms of Proposition
\ref{shapcoinv} and Corollary \ref{twoactcoinvG} to obtain

\begin{prop} \label{compcoinv}
Let $\Ga$ be a subgroup of $G$, $V$ a left $RG$-module.  Then there is a natural
isomorphism 
$$\varphi_{\Ga}:V_{\Ga} \to (R[\Ga \bs G]\otimes_R V)_G$$ given as follows.
For $v \in V$, $\varphi_{\Ga}(\overline{v})=\overline{\Ga \otimes v}$.
For $x \in G$, $\varphi_{\Ga}^{-1}(\overline{\Ga x \otimes v})=\overline{xv}$.
\end{prop}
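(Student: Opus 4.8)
The plan is to prove Proposition~\ref{compcoinv} exactly as the statement before Theorem~\ref{transinv} handles its invariant analogue, Proposition~\ref{compinv}: namely, by composing the two natural isomorphisms already established, \pref{shapcoinv} and \cref{twoactcoinvG}, and then checking that the composite and its inverse are given by the advertised explicit formulas. So the first step is to write down the composite
$$\varphi_\Ga\colon V_\Ga \xrightarrow{\ \pref{shapcoinv}\ }(RG\otimes_{R\Ga}\Res^G_\Ga V)_G \xrightarrow{\ \cref{twoactcoinvG}\ }(R[\Ga\bs G]\otimes_R V)_G,$$
which is an isomorphism of $R$-modules because it is a composite of isomorphisms. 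Since \cref{twoactcoinvG} is obtained by passing to $G$-coinvariants from the $G$-module isomorphism of \pref{twoactcoinv}, the whole chain is natural in $V$.

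Next I would trace an element through. Starting with $\ol v\in V_\Ga$, \pref{shapcoinv} sends it to $\ol{1\otimes v}\in(RG\otimes_{R\Ga}\Res^G_\Ga V)_G$. Then the isomorphism of \pref{twoactcoinv} (applied with $x=1$) sends $1\otimes v$ to $\Ga\otimes v\in R[\Ga\bs G]\otimes_R V$, so the induced map on $G$-coinvariants sends $\ol{1\otimes v}$ to $\ol{\Ga\otimes v}$. This gives the claimed formula $\varphi_\Ga(\ol v)=\ol{\Ga\otimes v}$. For the inverse, start with $\ol{\Ga x\otimes v}\in(R[\Ga\bs G]\otimes_R V)_G$; the inverse of \pref{twoactcoinv} sends $\Ga x\otimes v$ to $x^{-1}\otimes xv\in RG\otimes_{R\Ga}\Res^G_\Ga V$, and then the inverse of \pref{shapcoinv} sends $\ol{x^{-1}\otimes xv}$ to $\ol{xv}\in V_\Ga$, which is the asserted formula $\varphi_\Ga^{-1}(\ol{\Ga x\otimes v})=\ol{xv}$. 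Alternatively — and this is the remark the authors keep making — one can simply verify directly that the two explicit formulas are well defined (i.e. independent of the coset representative $x$ and of the representative $v$ of $\ol v$, using Remark~\ref{mean} and the relations defining the coinvariants) and are mutually inverse $R$-linear maps, bypassing \pref{shapcoinv} and \pref{twoactcoinv} entirely.

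There is no real obstacle here; the proposition is the dualization of \pref{compinv} and everything needed is already in place. The only point requiring a moment's care is \textbf{well-definedness} of $\varphi_\Ga^{-1}$: one must check that if $\Ga x=\Ga x'$, i.e. $x'=\gamma x$ for some $\gamma\in\Ga$, then $\ol{x'v}=\ol{\gamma x v}$ equals $\ol{xv}$ in $V_\Ga$ — which holds precisely because the $\Ga$-coinvariants kill $\gamma.(xv)-xv$ — and symmetrically that $\varphi_\Ga$ does not depend on the representative $v$. Both are immediate from the definitions of coinvariants in Subsection~\ref{notation} and the observation in Remark~\ref{mean}. Thus the proof is a one-line composition together with a routine tracing of explicit formulas, and I would present it in that compressed form.

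\begin{proof}
One composes the isomorphisms of Proposition~\ref{shapcoinv} and Corollary~\ref{twoactcoinvG}.  Explicitly, for $v\in V$ the isomorphism of Proposition~\ref{shapcoinv} carries $\ol v\in V_\Ga$ to $\ol{1\otimes v}\in(RG\otimes_{R\Ga}\Res^G_\Ga V)_G$, and then the isomorphism of Corollary~\ref{twoactcoinvG}, induced by that of Proposition~\ref{twoactcoinv} (with $x=1$), carries $\ol{1\otimes v}$ to $\ol{\Ga\otimes v}\in(R[\Ga\bs G]\otimes_R V)_G$; hence $\varphi_\Ga(\ol v)=\ol{\Ga\otimes v}$.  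For the inverse, given $x\in G$, the inverse of Proposition~\ref{twoactcoinv} carries $\Ga x\otimes v$ to $x^{-1}\otimes xv$, so the inverse of Corollary~\ref{twoactcoinvG} carries $\ol{\Ga x\otimes v}$ to $\ol{x^{-1}\otimes xv}$, which the inverse of Proposition~\ref{shapcoinv} then carries to $\ol{xv}\in V_\Ga$; hence $\varphi_\Ga^{-1}(\ol{\Ga x\otimes v})=\ol{xv}$.  Alternatively, one checks directly that these formulas are well defined (independent of the coset representative $x$ and of the representative of $\ol v$, using Remark~\ref{mean} and the definition of the coinvariants) and mutually inverse $R$-linear maps.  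Naturality in $V$ is clear since the isomorphism is a composite of natural isomorphisms.
\end{proof}
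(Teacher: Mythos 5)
Your proof is correct and takes essentially the same approach as the paper: compose the isomorphisms of Proposition~\ref{shapcoinv} and Corollary~\ref{twoactcoinvG} and read off the explicit formulas. The only cosmetic difference is that the paper verifies the inverse formula by one direct computation in the target module of coinvariants — namely $\overline{\Ga x\otimes v}=\overline{x^{-1}.(\Ga\otimes xv)}=\overline{\Ga\otimes xv}$ via the diagonal action — whereas you trace the element back through the two intermediate isomorphisms; both are fine.
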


\begin{proof}
This is clear.  For the formula for $\varphi_{\Ga}^{-1}$, note that in the module of coinvariants $(R[\Ga \bs G]\otimes_R V)_G$, for $x \in G$ we have by definition of the diagonal action $\overline{\Ga x \otimes v}=\overline{x^{-1}.(\Ga \otimes xv)}=
\overline{\Ga \otimes xv}$.
\end{proof}

\begin{thm} \label{transcoinv} 
(Transplantation of coinvariants)
Let $\Ga_1$ and $\Ga_2$ be subgroups of a group $G$, and let $V$ be a left $RG$-module.  Suppose that there is a map of right $RG$-modules $\tau:R[\Ga_1 \bs G] \to
R[\Ga_2 \bs G]$.  Then there is a covariantly functorial induced $R$-linear map $\tau_{\sharp }:V_{\Ga_1} \to V_{\Ga_2}$.  It is given as follows.  For $v \in V$,
\begin{equation} \label{transcoinvf}
\tau_{\sharp }(\overline v)=\overline{\tau(\Ga_1).v}.
\end{equation}

\end{thm}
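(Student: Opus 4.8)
The plan is to mirror the construction of $\tau^\sharp$ from Theorem~\ref{transinv}, but dualized throughout, using the coinvariant isomorphisms already established. First I would observe that, since $\tau\colon R[\Ga_1\bs G]\to R[\Ga_2\bs G]$ is a map of right $G$-modules, tensoring with the left $G$-module $V$ over $R$ gives an $R$-linear map $\tau\otimes\Id_V\colon R[\Ga_1\bs G]\otimes_R V\to R[\Ga_2\bs G]\otimes_R V$. The key point is that this map is equivariant for the diagonal left $G$-actions: the $G$-action on the left factor $R[\Ga_i\bs G]$ is $g.(\Ga_i x)=\Ga_i xg^{-1}$, which is built only from the right $G$-action that $\tau$ respects, and the action on $V$ is untouched, so $\tau\otimes\Id_V$ commutes with both. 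Hence it descends to a map on $G$-coinvariants $(\tau\otimes\Id_V)_G\colon (R[\Ga_1\bs G]\otimes_R V)_G\to (R[\Ga_2\bs G]\otimes_R V)_G$.

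Next I would define $\tau_\sharp$ as the composite $\varphi_{\Ga_2}^{-1}\circ(\tau\otimes\Id_V)_G\circ\varphi_{\Ga_1}$, where $\varphi_{\Ga_i}\colon V_{\Ga_i}\to (R[\Ga_i\bs G]\otimes_R V)_G$ are the isomorphisms of Proposition~\ref{compcoinv}. This is manifestly $R$-linear since each of the three maps is, and covariant functoriality in $V$ follows exactly as in the remark after Theorem~\ref{transinv}: given another $G$-map $\sigma\colon R[\Ga_2\bs G]\to R[\Ga_3\bs G]$, one has $(\sigma\otimes\Id_V)\circ(\tau\otimes\Id_V)=(\sigma\circ\tau)\otimes\Id_V$, and passing to coinvariants and conjugating by the $\varphi$'s yields $(\sigma\circ\tau)_\sharp=\sigma_\sharp\circ\tau_\sharp$. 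One should also note, as in Proposition~\ref{compatinv}, that for a $G$-map $\psi\colon V\to V'$ the square relating $\tau_\sharp^V$ and $\tau_\sharp^{V'}$ commutes, which is immediate from naturality of $\varphi$ and of $-\otimes R[\Ga_i\bs G]$; this will be needed later in the paper.

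Finally I would verify the explicit formula~\eqref{transcoinvf}. Starting from $\overline v\in V_{\Ga_1}$, Proposition~\ref{compcoinv} gives $\varphi_{\Ga_1}(\overline v)=\overline{\Ga_1\otimes v}$. Applying $\tau\otimes\Id_V$ and then passing to coinvariants sends this to $\overline{\tau(\Ga_1)\otimes v}$ in $(R[\Ga_2\bs G]\otimes_R V)_G$; here $\tau(\Ga_1)=\sum_j c_j\,\Ga_2 x_j$ is an $R$-linear combination of right cosets of $\Ga_2$, so the expression is $\sum_j c_j\,\overline{\Ga_2 x_j\otimes v}$. Applying $\varphi_{\Ga_2}^{-1}$ termwise, using $\varphi_{\Ga_2}^{-1}(\overline{\Ga_2 x\otimes v})=\overline{xv}$, gives $\sum_j c_j\,\overline{x_j v}=\overline{(\sum_j c_j x_j).v}=\overline{\tau(\Ga_1).v}$, where $\tau(\Ga_1).v$ is interpreted as in Remark~\ref{mean}. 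As with transplantation of invariants, the formula makes sense directly and can be checked to be well defined without reference to the Shapiro-type isomorphisms, so the impatient reader may take~\eqref{transcoinvf} as the definition. I do not expect a genuine obstacle here; the only point demanding a little care is confirming the equivariance of $\tau\otimes\Id_V$ for the diagonal actions, since $\tau$ is a priori only a \emph{right} $G$-map and one must check that the induced left action on $R[\Ga_i\bs G]$ via $g.(\Ga_i x)=\Ga_i xg^{-1}$ is indeed intertwined by $\tau$ — but this is formal, as that left action is literally the right action precomposed with $g\mapsto g^{-1}$.
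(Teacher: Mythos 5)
Your proposal is correct and follows essentially the same route as the paper's proof: both define $\tau_\sharp$ by conjugating the map induced on $G$-coinvariants by $\tau\otimes\Id_V$ with the isomorphisms $\varphi_{\Ga_i}$ of Proposition~\ref{compcoinv}, and then read off the explicit formula~\eqref{transcoinvf}. You spell out the equivariance of $\tau\otimes\Id_V$ for the diagonal action and the covariant functoriality in more detail than the paper does, but the argument is the same.
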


Note that this expression makes sense: $\tau(\Ga_1)$ is an $R$-linear combination of right cosets of $\Ga_2$, so by Remark \ref{mean}, such an expression is meaningful.

\begin{proof}
The map $\tau:R[\Ga_1 \bs G] \to R[\Ga_2 \bs G]$ induces a map
$$\tau \ot \Id:R[\Ga_1\bs G]\ot_RV \to R[\Ga_2\bs G]\ot_RV.$$  The desired
transplantation map $\tau_{\sharp }$ is then defined by the following commutative
diagram:

$$\xymatrix{
{V_{\Ga_1}}\ar^{\tau_{\sharp}}[r]\ar_{\varphi_{\Ga_1}}^{\sim}[d]&{V_{\Ga_2}}\ar^{\varphi_{\Ga_2}}_{\sim}[d]\\
{(R[\Ga_1\bs G]\ot_RV)_G}\ar_{\tau\otimes\Id}[r]&{(R[\Ga_2\bs G]\ot_RV)_G.}
}$$

In view of the explicit description of the vertical isomorphisms $\varphi_{\Ga_i}$
furnished by Proposition \ref{compcoinv} above, the verification of the formula
\eqref{transcoinvf} for $\tau_{\sharp }$ is immediate.
\end{proof}

As in the case of invariants, it is immediate from the explicit formula that
transplantation of coinvariants commutes with $G$-maps:

\begin{prop} \label{compatcoinv}
Let $\Ga_1$, $\Ga_2$ be subgroups of $G$, let $V$, $X$ be left $G$-modules, and
let
$\psi:V \to X$ a left $G$-module map.  Suppose that 
$\tau:R[\Ga_1 \bs G] \to R[\Ga_2 \bs G]$ is a right $G$-module map.  Then the
transplantations $\tau^V_{\sharp }:V_{\Ga_1}\to V_{\Ga_2}$ and
$\tau^X_{\sharp }:X_{\Ga_1}\to X_{\Ga_2}$ commute with $\psi$, i.e., the diagram
commutes:

$$\xymatrix{
{V_{\Ga_1}}\ar^{\tau^V_{\sharp}}[r]\ar_{\psi}[d]&{V_{\Ga_2}}\ar^{\psi}[d]\\
{X_{\Ga_1}}\ar_{\tau^X_{\sharp}}[r]&{X_{\Ga_2}}
}$$

\end{prop}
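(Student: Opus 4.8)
The plan is to deduce Proposition~\ref{compatcoinv} directly from the explicit transplantation formula~\eqref{transcoinvf} of Theorem~\ref{transcoinv}, exactly as the analogous statement~\pref{compatinv} for invariants was deduced from~\eqref{transinvf}. The only subtlety is that $\psi$ now appears on both sides, so I would first observe that $\psi\colon V\to X$, being a left $RG$-module map, descends to a map on coinvariants $\psi\colon V_{\Ga_i}\to X_{\Ga_i}$ for $i=1,2$ (it carries the submodule $S_V=\langle g.v-v\rangle$ of $V$ into the corresponding submodule $S_X$ of $X$, since $\psi(g.v-v)=g.\psi(v)-\psi(v)$), so that the vertical arrows in the diagram make sense.

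Then I would simply chase the formula. Let $v\in V$ and write $\tau(\Ga_1)=\sum_j r_j\,\Ga_2 x_j$ with $r_j\in R$ and $x_j\in G$; this is a well-defined $R$-linear combination of right cosets of $\Ga_2$, and by Remark~\ref{mean} the residue class $\overline{\Ga_2 x_j.w}$ in $W_{\Ga_2}$ is meaningful for any left $G$-module $W$ and any $w$. Going around the diagram one way, $\overline v\in V_{\Ga_1}$ maps under $\tau^V_{\sharp}$ to $\overline{\tau(\Ga_1).v}=\sum_j r_j\,\overline{\Ga_2 x_j.v}\in V_{\Ga_2}$, and then $\psi$ sends this to $\sum_j r_j\,\overline{\Ga_2 x_j.\psi(v)}\in X_{\Ga_2}$, using that $\psi$ is $G$-equivariant (so $\psi(x_j.v)=x_j.\psi(v)$) and that the induced map on coinvariants is $R$-linear and respects residue classes. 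Going around the other way, $\psi$ first sends $\overline v$ to $\overline{\psi(v)}\in X_{\Ga_1}$, and then $\tau^X_{\sharp}$ sends this to $\overline{\tau(\Ga_1).\psi(v)}=\sum_j r_j\,\overline{\Ga_2 x_j.\psi(v)}\in X_{\Ga_2}$. The two results agree, so the diagram commutes.

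There is essentially no obstacle here: the content is entirely bookkeeping, and the key point — already emphasized in the paragraph preceding the proposition — is that the transplantation map is \emph{defined} by the elementary closed formula~\eqref{transcoinvf}, from which naturality in the module variable is transparent. If one preferred a more structural argument, one could instead invoke the commutative diagram defining $\tau_{\sharp}$ in the proof of Theorem~\ref{transcoinv} together with the naturality of the isomorphisms $\varphi_{\Ga_i}$ of Proposition~\ref{compcoinv} in $V$ and the evident compatibility of $\tau\otimes\Id$ with $\psi\otimes\Id$; but the direct verification from~\eqref{transcoinvf} is shorter and is all that the proof needs to say.

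\begin{proof}
Since $\psi\colon V\to X$ is a map of left $RG$-modules, it carries the submodule of $V$ generated by $\{g.v-v\}$ into the corresponding submodule of $X$, and hence descends to $R$-linear maps $\psi\colon V_{\Ga_1}\to X_{\Ga_1}$ and $\psi\colon V_{\Ga_2}\to X_{\Ga_2}$; thus the diagram makes sense.  Write $\tau(\Ga_1)=\sum_j r_j\,\Ga_2 x_j$ with $r_j\in R$ and $x_j\in G$.  For $v\in V$, using Formula~\eqref{transcoinvf}, the equivariance of $\psi$, and the fact that the induced map $\psi$ on coinvariants is $R$-linear and respects residue classes, we compute
$$
\psi\bigl(\tau^V_{\sharp}(\overline v)\bigr)
=\psi\Bigl(\overline{\tau(\Ga_1).v}\Bigr)
=\sum_j r_j\,\overline{\Ga_2 x_j.\psi(v)}
=\overline{\tau(\Ga_1).\psi(v)}
=\tau^X_{\sharp}\bigl(\overline{\psi(v)}\bigr)
=\tau^X_{\sharp}\bigl(\psi(\overline v)\bigr).
$$
Hence the diagram commutes.
\end{proof}
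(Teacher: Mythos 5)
Your proof is correct and follows exactly the approach the paper intends: the paper itself gives no formal proof for Proposition~\ref{compatcoinv}, stating only (in the preceding sentence) that it ``is immediate from the explicit formula'' \eqref{transcoinvf}, which is precisely the computation you carry out. Your added observation that $\psi$ descends to the coinvariant quotients is a harmless and correct bit of bookkeeping that the paper leaves implicit.
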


\subsection{Transplantation of singular chains}\label{transsingchains}
As an application of Theorem \ref{transcoinv}, we consider transplantation of
chains, cycles, and homology.  The following lemma is easy to verify.

\begin{lem} \label{algfact} 
Let $\Ga$ be a group, and let $X$ be a left $\Ga$-set.  Then the free $\Z$-module 
$\Z[\Ga \bs X]$ on the orbit set is isomorphic to the $\Z$-module of coinvariants $\Z[X]_{\Ga}$.
\end{lem}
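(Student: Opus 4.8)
The plan is to exhibit the isomorphism concretely and check that the two obvious maps between $\Z[\Ga \bs X]$ and $\Z[X]_{\Ga}$ are mutually inverse. First I would define a map $\pi \colon \Z[X] \to \Z[\Ga \bs X]$ as the $\Z$-linear extension of $x \mapsto \Ga x$ (sending a basis element $x \in X$ to the orbit $\Ga x$ it belongs to). This is evidently surjective, since every orbit is hit. Next I would observe that $\pi$ is $\Ga$-invariant in the sense that $\pi(\gamma \cdot x) = \pi(x)$ for all $\gamma \in \Ga$, $x \in X$: both sides equal $\Ga x$. Hence $\pi$ kills the submodule $S$ generated by $\{\gamma \cdot z - z : \gamma \in \Ga,\ z \in \Z[X]\}$, so it factors through a surjective map $\bar\pi \colon \Z[X]_{\Ga} = \Z[X]/S \to \Z[\Ga \bs X]$.

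For the inverse, I would define $\iota \colon \Z[\Ga \bs X] \to \Z[X]_{\Ga}$ on basis elements by choosing, for each orbit $O \in \Ga \bs X$, a representative $x_O \in O$ and setting $\iota(O) = \overline{x_O}$. One then checks this is well-defined independently of the choice of representative: if $x, x'$ lie in the same orbit, say $x' = \gamma x$, then $\overline{x'} - \overline{x} = \overline{\gamma x - x}$, which is $0$ in $\Z[X]_{\Ga}$ by construction. It is immediate that $\bar\pi \circ \iota = \Id$ on the basis $\Ga \bs X$, hence everywhere. For the other composite, $\iota \circ \bar\pi$ sends a basis element $\overline{x}$ of $\Z[X]_{\Ga}$ to $\overline{x_{\Ga x}}$; but $x_{\Ga x}$ and $x$ lie in the same orbit, so $\overline{x_{\Ga x}} = \overline{x}$ in $\Z[X]_{\Ga}$ by the same argument, and since residue classes of elements of $X$ span $\Z[X]_{\Ga}$, we get $\iota \circ \bar\pi = \Id$.

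Alternatively, and perhaps more cleanly, one can avoid choosing representatives by noting that $\Z[X]$ is free on $X$ as a $\Z$-module with $\Ga$ permuting the basis, so $\Z[X] = \bigoplus_{O \in \Ga \bs X} \Z[O]$ as $\Z\Ga$-modules, and coinvariants commute with direct sums; thus $\Z[X]_{\Ga} = \bigoplus_O \Z[O]_{\Ga}$, and it suffices to check that $\Z[O]_{\Ga} \cong \Z$ for a single transitive $\Ga$-set $O$. Writing $O = \Ga / \Ga_x$ for a point stabilizer, this is the standard fact that $(\Z[\Ga/H])_{\Ga} \cong \Z$ via the augmentation, which follows since the relations $\gamma \cdot (\eta H) - \eta H$ identify all basis elements.

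There is no real obstacle here; the only point requiring any care is the well-definedness of the inverse map (or, in the second approach, checking that coinvariants commute with the direct sum decomposition, which is formal since coinvariants are a left adjoint and hence preserve colimits). I would present the first, representative-free half of the argument in detail and dispatch the well-definedness of $\iota$ in a line, since both directions reduce to the single observation that $\overline{\gamma x} = \overline{x}$ in $\Z[X]_{\Ga}$.
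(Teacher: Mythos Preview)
Your proof is correct, and since the paper omits the proof entirely (stating only that the lemma ``is easy to verify''), your explicit construction of mutually inverse maps is precisely the routine verification the authors leave to the reader. Either of your two approaches would serve; the first is closest in spirit to the explicit formulas the paper favors elsewhere.
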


For a compact Riemannian manifold $M$, let $\Delta_q(M)$ denote the set of
singular $q$-simplices on $M$.  Then the $\Z$-module of singular $q$-chains on $M$ is given by $C_q(M)=\Z[\Delta_q(M)]$.  Now let $G$ be a finite group of isometries of $M$, and let $\Ga_1, \Ga_2 \subseteq G$ be subgroups.  Suppose that there is a right $\Z G$-module map $\tau:\Z[\Ga_1\bs G] \to \Z[\Ga_2\bs G]$.  We will show that there is an induced transplantation of chains $\tau_{\sharp }:C_q(\Ga_1 \bs M) \to C_q(\Ga_2 \bs M)$ on the orbit spaces.  To see this, note that for $\Ga=\Ga_1$ or $\Ga_2$, the set of orbits $\Ga \bs \Delta_q(M)$ is $\Delta_q(\Ga \bs M)$, by covering space theory.  By the lemma, $C_q(\Ga \bs M)=\Z[\Delta_q(\Ga \bs M)] \cong
\Z[\Delta_q(M)]_{\Ga}=C_q(M)_{\Ga}$.  Thus Theorem \ref{transcoinv} immediately
yields a transplantation map $\tau_{\sharp }:C_q(\Ga_1 \bs M) \to C_q(\Ga_2 \bs M)$ on $q$-chains. Moreover, since the singular boundary maps are natural and hence commute with the $G$-action, Proposition \ref{compatcoinv} shows that the transplantation of cycles commutes with boundaries and hence carries cycles to cycles, boundaries to boundaries, etc.  Thus there is an induced transplantation $\tau_{\sharp}:H_q(\Ga_1 \bs M) \to H_q(\Ga_2 \bs M)$ on homology.

\subsection{Pairing lemma: duality of transplantations of invariants and coinvariants}
To compare the intermediate Jacobians of two Sunada isospectral compact K\"ahler manifolds, we will use transplantation of invariants.  We will need the fact that the transplantation of invariants and transplantation of coinvariants enjoy a
duality relation relative to certain pairings.

\begin{dfn}
Let $G$ be a group, let $W$ be a right $\C G$-module, and let $V$ be a left $\Z G$-module.  A $\Z$-bilinear pairing $\langle\cdot,\cdot\rangle:W \times V \to \C$ is 
{\emph{$G$-balanced}} if for all $w \in W$, $v \in V$, and $g \in G$, we have
$\langle w,gv\rangle=\langle wg,v\rangle$.
\end{dfn}

\begin{exa} \label{integration}
Let $M$ be a compact oriented manifold, and let $G$ be a subgroup of the group of diffeomorphisms of $M$.  Let $V=C_q(M)$, the $\Z$-module of smooth singular $q$-chains, let $W=\ms{A}^q_{\C}(M)$ be the space of $q$-forms, and let $\langle\cdot,\cdot\rangle:\ms{A}^q_{\C}(M)\times C_q(M)\to\C$ be the integration pairing $\langle\omega,c\rangle=\int_c\,\omega$.  Then $G$ acts on $C_q(M)$ on the left by composition and on $\ms{A}^q_{\C}(M)$ on the right by pullback.  The pairing is clearly $G$-balanced, as$\int_{gc}\,\omega=\int_c\,g^*\omega$.
\end{exa}

Let $\Ga$ be a subgroup of $G$.  A $G$-balanced pairing $\langle\cdot,\cdot\rangle:W
\times V \to \C$ induces a pairing $\langle\langle\cdot,\cdot\rangle\rangle:W^{\Ga}
\times V_{\Ga} \to \C$ given by $\langle\langle w,\overline{v}\rangle\rangle=\langle w,v\rangle$.  Indeed, the value of $\langle w,v\rangle$ depends only upon the residue class $\overline{v}\in V_{\Ga}$, since for $\gamma\in\Ga$, $\langle w,\gamma v\rangle=\langle w\gamma,v\rangle=\langle w,v\rangle$, as $w$ is $\Ga$-invariant.  The pairing $\langle\langle\cdot,\cdot\rangle\rangle$ induces a map $\flat_{\Ga}:W^{\Ga}\to (V_{\Ga})^*$ sending $w\in W^{\Ga}$ to the complex linear functional given by $\ol{v}\mapsto\langle\langle w,\ol{v}\rangle\rangle=\langle w,v\rangle$.

Let $G$ be a group, let $W$ be a right $\C G$-module, and let $V$ a left $\Z G$-module.  Let $\langle\cdot,\cdot\rangle:W \times V \to \C$ be a $G$-balanced pairing as above, and let $\Ga_1$, $\Ga_2$ be subgroups of $G$.  Then there are maps
$\flat_{\Ga_1}:W^{\Ga_1}\to (V_{\Ga_1})^*$ and $\flat_{\Ga_2}:W^{\Ga_2}\to (V_{\Ga_2})^*$ as described above.  Let $\tau:\Z[\Ga_1 \bs G] \to \Z[\Ga_1 \bs G]$ be a map of right $\Z G$-modules.  Then by transplantation of coinvariants (Theorem \ref{transcoinv}), $\tau$ induces $\tau_{\sharp }:V_{\Ga_1} \to V_{\Ga_2}$, which in turn induces a map backwards on the duals, $(\tau_{\sharp})^*:(V_{\Ga_2})^*\to(V_{\Ga_2})^*$ (here $(V_{\Ga_i})^*$ denotes the complex dual $\Hom(V_{\Ga_i},\C)$).  By Theorem \ref{transinv}, $\tau$ also induces the transplantation map $\tau^{\sharp }:W^{\Ga_2} \to W^{\Ga_1}$ on invariants.

\begin{lem} \label{pairing} (Pairing lemma)
Transplantation is compatible with the pairings; i.e., the diagram

$$\xymatrix{
{W^{\Ga_2}}\ar_{\tau^{\sharp}}[d]\ar^{\flat_{\Ga_2}}[r]&{(V_{\Ga_2})^*}\ar^{(\tau_{\sharp})^*}[d]\\
{W^{\Ga_1}}\ar_{\flat_{\Ga_1}}[r]&{(V_{\Ga_1})^*}
}$$

commutes.
\end{lem}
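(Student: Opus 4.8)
The plan is to chase the diagram using the explicit formulas for the transplantations established in Theorems~\ref{transinv} and \ref{transcoinv}, together with the definitions of the maps $\flat_{\Ga_i}$. Let $w\in W^{\Ga_2}$. I would compute both composites applied to $w$, evaluating the resulting functionals on an arbitrary residue class $\ol{v}\in V_{\Ga_1}$, and check they agree. Write $\tau(\Ga_1)=\sum_j n_j\,\Ga_2 x_j$ as an $R$-linear combination of right cosets of $\Ga_2$ (with $n_j\in R$, $x_j\in G$).

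First I would follow the top-then-right route. By definition, $\flat_{\Ga_2}(w)\in(V_{\Ga_2})^*$ is the functional $\ol{u}\mapsto\langle w,u\rangle$ for $u\in V$. Applying $(\tau_{\sharp})^*$ precomposes with $\tau_{\sharp}:V_{\Ga_1}\to V_{\Ga_2}$, so $(\tau_{\sharp})^*(\flat_{\Ga_2}(w))$ is the functional sending $\ol{v}\in V_{\Ga_1}$ to $\langle\langle w,\tau_{\sharp}(\ol v)\rangle\rangle$. By Formula~\eqref{transcoinvf}, $\tau_{\sharp}(\ol v)=\ol{\tau(\Ga_1).v}=\sum_j n_j\,\ol{\Ga_2 x_j.v}$ (using Remark~\ref{mean} that a right coset of $\Ga_2$ acting on $V$ yields a well-defined residue class in $V_{\Ga_2}$; concretely $\ol{\Ga_2 x_j.v}=\ol{x_j.v}$ since $\ol{\gamma x_j.v}=\ol{x_j.v}$ in $V_{\Ga_2}$ for $\gamma\in\Ga_2$). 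Hence this route gives $\ol v\mapsto\sum_j n_j\,\langle w,x_j v\rangle$.

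Next I would follow the left-then-bottom route. By Formula~\eqref{transinvf}, $\tau^{\sharp}(w)=w.\tau(\Ga_1)=\sum_j n_j\,(w.\Ga_2 x_j)\in W^{\Ga_1}$, where again $w.\Ga_2 x_j=w.x_j$ is well-defined since $w$ is $\Ga_2$-invariant. Then $\flat_{\Ga_1}(\tau^{\sharp}(w))$ is the functional on $V_{\Ga_1}$ sending $\ol v$ to $\langle\tau^{\sharp}(w),v\rangle=\sum_j n_j\,\langle w.x_j,v\rangle$. Using the $G$-balanced property of the pairing, $\langle w.x_j,v\rangle=\langle w,x_j v\rangle$, so this route also gives $\ol v\mapsto\sum_j n_j\,\langle w,x_j v\rangle$.

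The two composites agree on every $w\in W^{\Ga_2}$ and every $\ol v\in V_{\Ga_1}$, so the diagram commutes. The only substantive point — and the step one must handle with a little care rather than a computation — is the bookkeeping that the symbol $w.\Ga_2 x_j$ appearing in the transplantation formula coincides with $w.x_j$, and correspondingly $\ol{\Ga_2 x_j.v}$ with $\ol{x_j.v}$; both follow from the $\Ga_2$-invariance of $w$ (resp. the passage to $\Ga_2$-coinvariants) exactly as recorded in Remark~\ref{mean}. With that identification in place, the $G$-balanced condition $\langle w.g,v\rangle=\langle w,gv\rangle$ is precisely what matches the two sides, and there is no real obstacle.
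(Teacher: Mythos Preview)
Your proof is correct and is precisely the ``immediate verification'' the paper alludes to: you unwind the definitions of $\flat_{\Ga_i}$, apply formulas~\eqref{transinvf} and~\eqref{transcoinvf}, and invoke the $G$-balanced condition $\langle w.g,v\rangle=\langle w,gv\rangle$ to match the two sides. This is exactly the paper's approach, just written out in full.
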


\begin{proof}
This is an immediate verification, using the $G$-balanced property and the
explicit formulas \eqref{transinvf} and \eqref{transcoinvf} for the two
transplantations.
\end{proof}

\section{Proofs of the Main Results}
We will use transplantation to prove Theorems~\ref{thm.mt} and \ref{thm.pq}.  We first record a lemma from \cite{GordonMakoverWebb}.

\begin{lem}\label{EquivC-repsDefQ}
Let $G$ be a finite group, and let $V_1$ and $V_2$ be finite dimensional rational
representations of $G$ (i.e., $\Q G$-modules).  Suppose that the complexified
representations $\C\otimes_{\Q} V_1$ and $\C\otimes_{\Q} V_2$ are equivalent as
$\C G$-modules.  Then $V_1$ and $V_2$ were already equivalent as $\Q G$-modules.
\end{lem}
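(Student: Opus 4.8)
The plan is to reduce the statement to a comparison of the multiplicities of simple constituents, exploiting the fact that $\Q G$ is semisimple (Maschke's theorem applies since $|G|$ is invertible in $\Q$) together with the compatibility of $\Hom$ between finitely generated modules with the scalar extension $\Q\hookrightarrow\C$; this is one of the standard incarnations of the Noether--Deuring theorem. First I would fix representatives $\{S_i\}_{i\in I}$ for the isomorphism classes of simple $\Q G$-modules and, using semisimplicity, write $V_1\cong\bigoplus_{i\in I}S_i^{a_i}$ and $V_2\cong\bigoplus_{i\in I}S_i^{b_i}$ with nonnegative integers $a_i,b_i$, almost all of which vanish. Since these decompositions determine $V_1$ and $V_2$ up to isomorphism, it suffices to prove $a_i=b_i$ for every $i$.

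Next I would make the multiplicities intrinsic. For each $i$, put $D_i=\End_{\Q G}(S_i)$; by Schur's lemma this is a finite-dimensional division algebra over $\Q$, and by Schur's lemma together with additivity of $\Hom$ one gets $\dim_{\Q}\Hom_{\Q G}(S_i,V_1)=a_i\dim_{\Q}D_i$ and $\dim_{\Q}\Hom_{\Q G}(S_i,V_2)=b_i\dim_{\Q}D_i$. Now I would invoke the base-change isomorphism: for finitely generated $\Q G$-modules $M$ and $N$ there is a natural isomorphism $\C\otimes_{\Q}\Hom_{\Q G}(M,N)\xrightarrow{\ \sim\ }\Hom_{\C G}(\C\otimes_{\Q}M,\C\otimes_{\Q}N)$. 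This is proved by choosing a finite presentation $(\Q G)^s\to(\Q G)^t\to M\to 0$, applying the left-exact functor $\Hom_{\Q G}(-,N)$, and tensoring the resulting exact sequence with $\C$ over $\Q$: since $\C$ is free (hence flat) over $\Q$, tensoring is exact and commutes with $\Hom$ out of a finite free module, which identifies the result with the analogous construction performed over $\C G$. Applying this with $M=S_i$ and $N=V_1$, resp. $N=V_2$, and taking $\C$-dimensions gives
\begin{equation*}
\dim_{\C}\Hom_{\C G}(\C\otimes S_i,\C\otimes V_1)=a_i\dim_{\Q}D_i,\qquad
\dim_{\C}\Hom_{\C G}(\C\otimes S_i,\C\otimes V_2)=b_i\dim_{\Q}D_i.
\end{equation*}

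Finally, the hypothesis $\C\otimes_{\Q}V_1\cong\C\otimes_{\Q}V_2$ as $\C G$-modules forces the two left-hand sides above to be equal for every $i$; since $\dim_{\Q}D_i\ge 1$, this yields $a_i=b_i$ for all $i$, and hence $V_1\cong V_2$ as $\Q G$-modules. I do not anticipate any serious obstacle: the argument is entirely formal once semisimplicity is in hand, and the only point deserving care is the base-change isomorphism for $\Hom$, which is why I would spell out the finite-presentation argument rather than merely cite flatness. (Alternatively, one could cite the Noether--Deuring theorem verbatim, but the dimension count is short and self-contained.)
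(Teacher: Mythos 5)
Your argument is correct and is essentially the standard proof of the Noether--Deuring theorem: reduce to multiplicities via semisimplicity of $\Q G$, compute multiplicities as $\dim\Hom$ divided by $\dim\End$, and transport the $\Hom$-computation across the flat base change $\Q\hookrightarrow\C$ via a finite-presentation argument. Note, however, that this paper does not actually prove the lemma at all; it is cited without proof from the earlier paper of Gordon, Makover, and Webb (\emph{Adv.\ Math.}\ \textbf{197} (2005), 86--119), so there is no in-paper proof to compare against. For what it is worth, a slightly shorter route to the same conclusion in this special case is via characters: the character of $V_i$ (computed over $\Q$) equals the character of $\C\otimes_{\Q}V_i$, so the hypothesis forces $\chi_{V_1}=\chi_{V_2}$, and two semisimple $\Q G$-modules with equal characters are isomorphic because the characters of the simple $\Q G$-modules are $\Q$-linearly independent. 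This trades the flat-base-change bookkeeping for a small amount of character theory; your approach is more self-contained and generalizes verbatim to any field extension in characteristic zero (and, with the finite-presentation formulation, to any flat ring extension), which is the real content of Noether--Deuring.
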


Recalling Remark~\ref{rem.jac}, we can now establish Theorem~\ref{thm.mt}, which we first restate in the language of representations.

\begin{thm}\label{thm.mt'}
Let $X$ be a compact complex manifold of K\"ahler type and let $G$ be a finite group of biholomorphic transformations of $X$.  Let $\Ga_1$ and $\Ga_2$ be subgroups of $G$ that act freely on $X$, and let $X_i=\Ga_i\bs X$ for $i=1,2$.  Suppose that the complex linear right representations $\C[\Ga_1 \bs G]$ and $\C[\Ga_2 \bs G]$ of $G$ afforded by the two right coset spaces are equivalent (see Section \ref{notation} for notation and definitions).  Then for $1\le k\le d$, the intermediate Jacobians $J^k(X_1)$ and $J^k(X_2)$ are isogenous.
\end{thm}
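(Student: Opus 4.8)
The strategy is to pass the transplantation machinery of Section~5 through the Hodge-theoretic description of $J^k$. Fix $k$ and set $n = 2k-1$. The key observation is that all the data defining $J^k(X_i)$ — the complex cohomology $H^{n}(X_i,\C)$, the Hodge filtration level $F^k H^{n}(X_i,\C)$, and the integral lattice $H^{n}(X_i,\Z)$ — arise as invariants or coinvariants of natural $G$-modules attached to $X$. Concretely, since $X_i = \Ga_i\bs X$ with $X\to X_i$ a normal covering, one has $H^n(X_i,\C) = H^n(X,\C)^{\Ga_i}$ (cohomology of the quotient is the $\Ga_i$-invariants of the cohomology of the cover, as $\Ga_i$ is finite), and similarly the de~Rham / Dolbeault complexes on $X_i$ are the $\Ga_i$-invariants of those on $X$. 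Because $G$ acts biholomorphically, it preserves the bigrading $\ms{A}^{p,q}(X)$ and hence the subcomplex $F^p\ms{A}_\C^\bullet(X)$; by Proposition~\ref{FiltrationCompat} the subspace $F^k H^n(X_i,\C)$ is likewise the $\Ga_i$-invariants of $F^k H^n(X,\C)$. On the homology side, Subsection~\ref{transsingchains} already gives $H_n(X_i) = H_n(X)_{\Ga_i}$ as coinvariants, so the lattice $H^n(X_i,\Z) = \Hom(H_n(X_i),\Z) = (H_n(X_i))^*$ is the complex dual of a coinvariant module.

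**Key steps.** First, choose a $\Z G$-module isomorphism $\tau:\Z[\Ga_1\bs G]\xrightarrow{\sim}\Z[\Ga_2\bs G]$. Such a $\tau$ exists: by hypothesis $\C[\Ga_1\bs G]\cong\C[\Ga_2\bs G]$, hence by Lemma~\ref{EquivC-repsDefQ} the rational representations $\Q[\Ga_1\bs G]$ and $\Q[\Ga_2\bs G]$ are isomorphic, and clearing denominators produces an injective $\Z G$-map with finite cokernel (a ``rational isomorphism'' of lattices); for the argument we only need $\tau$ injective with finite cokernel, not an isomorphism on the nose. Second, apply transplantation of invariants (Theorem~\ref{transinv}) to $W = H^n(X,\C)$ and to its subspace $F^k H^n(X,\C)$; by Proposition~\ref{compatinv} (naturality of $\tau^\sharp$ under $G$-maps) the map $\tau^\sharp:H^n(X_2,\C)\to H^n(X_1,\C)$ carries $F^k H^n(X_2,\C)$ into $F^k H^n(X_1,\C)$. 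Third, apply transplantation of coinvariants (Theorem~\ref{transcoinv}, as developed for singular chains in Subsection~\ref{transsingchains}) to get $\tau_\sharp:H_n(X_1)\to H_n(X_2)$, and dualize. Fourth — this is the crux — combine the Pairing Lemma~\ref{pairing} applied to the integration pairing of Example~\ref{integration} ($W = \ms{A}^n_\C(X)$, $V = C_n(X)$): it shows that under the de~Rham identification $H^n(X_i,\C)\cong (H_n(X_i)\otimes\C)^*$, the transplantation $\tau^\sharp$ on cohomology is identified with $(\tau_\sharp)^*$, the dual of the homology transplantation. In particular $\tau^\sharp$ restricts to a map of the full integral lattices $H^n(X_2,\Z) = H_n(X_2)^*\to H_n(X_1)^*= H^n(X_1,\Z)$, with finite cokernel since $\tau_\sharp$ on lattices has finite cokernel (it is a rational iso of $\Z$-lattices). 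Assembling: $\tau^\sharp:H^n(X_2,\C)\to H^n(X_1,\C)$ is $\C$-linear, carries $F^k$ to $F^k$, and carries the lattice $H^n(X_2,\Z)$ into $H^n(X_1,\Z)$ with finite index image; hence it descends to a surjective holomorphic homomorphism of complex tori
$$
J^k(X_2) = \frac{H^n(X_2,\C)}{F^k H^n(X_2,\C)\oplus H^n(X_2,\Z)}\longrightarrow \frac{H^n(X_1,\C)}{F^k H^n(X_1,\C)\oplus H^n(X_1,\Z)} = J^k(X_1)
$$
with finite kernel, i.e.\ an isogeny. Since $\tau$ was a rational isomorphism one can also run the argument with a rational inverse of $\tau$, confirming the two tori are isogenous in the symmetric sense of Definition~\ref{isogeny}.

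**Main obstacle.** The delicate point is matching the three structures simultaneously through a \emph{single} map. Transplantation of invariants $\tau^\sharp$ handles $H^n(X,\C)$ and its Hodge pieces cleanly (everything is a $G$-submodule and $\tau^\sharp$ is functorial), but it is defined on \emph{complex} forms/cohomology and there is no reason \emph{a priori} that $\tau^\sharp$ should respect the integral lattice — the lattice is not a $\C G$-submodule of $H^n(X,\C)$. Resolving this is exactly what the Pairing Lemma buys us: it re-expresses $\tau^\sharp$ on cohomology as the transpose of a transplantation \emph{of chains}, which manifestly preserves integrality (it is the $\Z$-linear map $\tau_\sharp$ on $C_n$, hence on $H_n(-;\Z)$, hence on its $\Z$-dual). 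So the work is in setting up the identifications $H^n(X_i,\C)\cong(H_n(X_i)_\C)^*$ compatibly with $\tau^\sharp$ and $(\tau_\sharp)^*$ via Lemma~\ref{pairing}, and in checking that ``rational isomorphism of $\Z$-lattices'' is exactly what yields ``finite kernel'' for the induced torus map. The Weil Jacobian case is then identical except that one uses the Weil complex structure, which is again built from the Hodge decomposition (preserved by $\tau^\sharp$ since $G$ acts biholomorphically), so no new idea is needed.
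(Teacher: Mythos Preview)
Your proof is correct and follows essentially the same route as the paper's: obtain an integral $\tau$ by clearing denominators via Lemma~\ref{EquivC-repsDefQ}, use transplantation of invariants together with Proposition~\ref{FiltrationCompat} to get a $\C$-linear map respecting the Hodge filtration, use transplantation of coinvariants on singular chains to handle the integral lattice, and invoke the Pairing Lemma~\ref{pairing} to see that these agree, so that the induced torus map has finite kernel. The only cosmetic difference is that you apply $\tau^\sharp$ directly to the cohomology $G$-module $H^n(X,\C)$ (using that $\Ga_i$-invariants commute with cohomology for finite $\Ga_i$), whereas the paper works at the level of forms $\ms{A}_\C^n(X)$ first and then descends; the paper also phrases the final step via the Snake Lemma rather than a direct finite-index argument, but the content is the same.
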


\begin{rem} Here $J^k(X_i)$ is the $k$th Griffiths intermediate Jacobian, as defined above in Definition~\ref{def.jac}.  The reader familiar with the Weil intermediate Jacobians $J_W^k(X)$ (see \cite{Weil} or \cite{Lewis}) can easily adapt the proof below to prove isogeny of $J_W^k(X_1)$ and $J_W^k(X_2)$.

\end{rem}

\begin{proof}
For $i=1,2$, the complex permutation representation $\C[\Ga_i\bs G]$ is just the complexification $\C\otimes_{\Q}\Q[\Ga_i\bs G]$ of the rational permutation representation $\Q[\Ga_i\bs G]$, so by the lemma, there is an equivalence $\Q[\Ga_1\bs G]\to\Q[\Ga_2\bs G]$ of rational permutation representations.  Multiplying by a large enough integer to clear denominators yields a $\Z G$-module map $\tau:\Z[\Ga_1\bs G]\to\Z[\Ga_2\bs G]$.  Clearing denominators in the inverse equivalence $\Q[\Ga_2\bs G]\to\Q[\Ga_1\bs G]$ yields a $\Z G$-module map $\sigma:\Z[\Ga_2\bs G]\to\Z[\Ga_1\bs G]$; while $\sigma$ and $\tau$ are not inverses, their composites $\sigma\circ\tau$ and $\tau\circ\sigma$ are just multiplication by an integer.

The group $G$ acts on $X$ on the left, hence on the right by pullback on the complex-valued differential $(2k-1)$-forms $\ms{A}_{\C}^{2k-1}(X)$.  For $i=1,2$, the $\Ga_i$-invariant forms are just the forms on $X_i=\Ga_i\bs X$, i.e., $\ms{A}_{\C}^{2k-1}(X)^{\Ga_i}=\ms{A}_{\C}^{2k-1}(X_i)$.  The transplantation of invariants $\tau^{\sharp}:\ms{A}_{\C}^{2k-1}(X_2)\to\ms{A}_{\C}^{2k-1}(X_1)$ given by Theorem \ref{transinv} is compatible with the exterior derivative by Theorem \ref{compatinv} and hence carries closed (respectively, exact) forms into closed (respectively, exact) forms, so it induces a (contravariant) transplantation map $\tau^{\sharp}:H^{2k-1}(X_2,\C)\to H^{2k-1}(X_1,\C)$ on cohomology.  For the same reason, Proposition \ref{FiltrationCompat} shows that $\tau^{\sharp}$ respects the Hodge filtration, so it restricts to a transplantation mapping $\tau^{\sharp}:F^k H^{2k-1}(X_2,\C)\to F^k H^{2k-1}(X_1,\C)$ making the left square in the diagram 
\begin{equation}\label{H/FH}
\xymatrix@R=30pt@C=20pt{
{0}\ar[r]&{F^k H^{2k-1}(X_2,\C)}\ar[r]\ar_{\tau^{\sharp}}[d]&{H^{2k-1}(X_2,\C)}\ar[r]\ar_{\tau^{\sharp}}[d]&{H^{2k-1}(X_2,\C)/F^k H^{2k-1}(X_2,\C)}\ar[r]\ar_{\tau^{\sharp}}@{-->}[d]&{0}\\
{0}\ar[r]&{F^k H^{2k-1}(X_1,\C)}\ar[r]&{H^{2k-1}(X_1,\C)}\ar[r]&{H^{2k-1}(X_1,\C)/F^k H^{2k-1}(X_1,\C)}\ar[r]&{0}}
\end{equation}
commute and inducing the right vertical arrow on the quotients.  Temporarily denote by $V_k(X_i)$ the Hodge filtration quotient appearing in the definition of $J^k(X_i)$: i.e., $V_k(X_i)=H^{2k-1}(X_i,\C)/F^k H^{2k-1}(X_i,\C)$, $i=1,2$, so that $J^k(X_i)=V_k(X_i)/H^{2k-1}(X_i,\Z)$.   

The boundary maps in the chain complex of smooth singular chains are maps of $G$-modules, so the transplantation of coinvariants 
$$\xymatrix@R=8pt{
{C_{2k-1}(X_1)}\ar@{=}[d]&{C_{2k-1}(X_2)}\ar@{=}[d]\\
{C_{2k-1}(\Ga_1\bs X)}\ar@{=}[d]&{C_{2k-1}(\Ga_2\bs X)}\ar@{=}[d]\\
{C_{2k-1}(X)_{\Ga_1}}\ar^{\tau_{\sharp}}[r]&{C_{2k-1}(X)_{\Ga_2}}
}$$
arising from Theorem \ref{transcoinv} and section \ref{transsingchains} carries cycles to cycles and boundaries to boundaries, and hence descends to a transplantation map $\tau_{\sharp}:H_{2k-1}(X_1)\to H_{2k-1}(X_2)$ on integer homology.  

Consider the diagram
$$\xymatrix@R=8pt{
{H^{2k-1}(X_2,\Z)}\ar@{=}[d]\\
{\Hom(H_{2k-1}(X_2),\Z)}\ar_{(\tau_{\sharp})^*}[ddd]\ar@{^{(}->}[r]&{\Hom(H_{2k-1}(X_2),\C)}\ar_{(\tau_{\sharp})^*}[ddd]\ar^{\quad\flat_{\Ga_2}^{-1}}[r]&{H^{2k-1}(X_2,\C)}\ar_{\tau^{\sharp}}[ddd]\ar@{->>}[r]&{V_k(X_2)}\ar_{\tau^{\sharp}}[ddd]\\\\\\
{\Hom(H_{2k-1}(X_1),\Z)}\ar@{^{(}->}[r]&{\Hom(H_{2k-1}(X_1),\C)}\ar^{\quad\flat_{\Ga_1}^{-1}}[r]&{H^{2k-1}(X_1,\C)}\ar@{->>}[r]&{V_k(X_1)}\\
{H^{2k-1}(X_1,\Z)}\ar@{=}[u]
}$$
which we claim commutes.  The left square obviously commutes.  The center square commutes, by Lemma \ref{pairing} (the Kronecker pairing of $H_{2k-1}(X_i)$ with $H^{2k-1}(X_i,\C)$ is induced by the integration pairing of a $(2k-1)$-chain with a $(2k-1)$-form of Example \ref{integration}; moreover, the Kronecker pairing is nondegenerate, so the maps $\flat_{\Ga_i}$ are isomorphisms), while the right square commutes because it is the right square of the diagram \eqref{H/FH} above.  This establishes the claim.  For $i=1,2$, let $f_i:H^{2k-1}(X_i,\Z)\to V_k(X_i)$ be the composite along each row of the diagram.

Now consider the diagram
$$\xymatrix{
{0}\ar[r]&{H^{2k-1}(X_2,\Z)}\ar_{(\tau_{\sharp})^*}[d]\ar^{\quad f_2}[r]&{V_k(X_2)}\ar_{\tau^{\sharp}}[d]\ar@{->>}[r]&{J^k(X_2)}\ar_{\hat{\tau}}@{-->}[d]\ar[r]&{0}\\
{0}\ar[r]&{H^{2k-1}(X_1,\Z)}\ar_{\quad f_1}[r]&{V_k(X_1)}\ar@{->>}[r]&{J^k(X_1)}\ar[r]&{0}
}$$
whose rows define the $k$th intermediate Jacobians $J^k(X_i)$.  The commutativity of the left square induces the right vertical map $\hat{\tau}:J^k(X_2)\to J^k(X_1)$ making the diagram commute.  By the functoriality of the transplantation construction, the left vertical map $(\tau_{\sharp})^*$ is a rational isomorphism of free finite-rank $\Z$-modules, hence is injective with finite cokernel.  The middle vertical map is an isomorphism of complex vector spaces.  By the Snake Lemma, the cokernel of the left vertical map $(\tau_{\sharp})^*$ is isomorphic to the kernel of the right vertical map $\hat{\tau}$, so the latter is the desired isogeny of $J^k(X_2)$ with $J^k(X_1)$. 

\end{proof}

We next prove Theorem~\ref{thm.pq}, which we also restate in the language of representations.

\begin{thm}\label{thm.pq'} Let $X$ be a compact $n$-dimensional K\"ahler manifold and let $G$ be a finite group of holomorphic isometries of
$X$.  Let $\Ga_1$ and $\Ga_2$ be subgroups of $G$ that act freely on $X$ and let $X_i=\Ga_i\bs X$, $i=1,2$.  Suppose that the complex linear right representations $\C[\Ga_1 \bs G]$ and
$\C[\Ga_2 \bs G]$ of $G$ afforded by the right coset spaces are equivalent.  Then for each pair of non-negative integers $(p,q)$ with $p+q\leq n$, the Hodge Laplacians on $\ms{A}^{p,q}(\Ga_1\bs X)$ and $\ms{A}^{p,q}(\Ga_2\bs X)$ are isospectral.

\end{thm}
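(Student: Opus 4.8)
The plan is to run the argument of Theorem~\ref{thm.mt'} again, but now transplanting along a $G$-equivariant \emph{operator}---the Hodge Laplacian---instead of along the exterior derivative. First I would set up the geometry: since $G$ consists of holomorphic isometries, the metric that $X_i=\Ga_i\bs X$ inherits makes the covering $\pi_i\colon X\to X_i$ a local biholomorphic isometry, so pullback identifies $\ms{A}^{p,q}(X_i)$ with the space $\ms{A}^{p,q}(X)^{\Ga_i}$ of $\Ga_i$-invariant forms of type $(p,q)$, compatibly with the Hodge Laplacians. The K\"ahler hypothesis enters exactly here: on a K\"ahler manifold the identity $\Delta_d=2\Delta_{\bar\partial}$ shows that the Hodge Laplacian preserves the bigrading, so it restricts to an elliptic, formally self-adjoint operator $\Delta$ on $\ms{A}^{p,q}(X)$; and because every $g\in G$ preserves the K\"ahler structure, $\Delta$ commutes with the $G$-action, i.e.\ $\Delta$ is a map of right $\C G$-modules $\ms{A}^{p,q}(X)\to\ms{A}^{p,q}(X)$ (with $G$ acting on the right by pullback).

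Next, exactly as in the proof of Theorem~\ref{thm.mt'}, the representation equivalence of $\Ga_1$ and $\Ga_2$ yields right $\C G$-module maps $\tau\colon\C[\Ga_1\bs G]\to\C[\Ga_2\bs G]$ and $\sigma\colon\C[\Ga_2\bs G]\to\C[\Ga_1\bs G]$ whose composites $\sigma\circ\tau$ and $\tau\circ\sigma$ are multiplication by a nonzero integer $N$. Applying Theorem~\ref{transinv} with $W=\ms{A}^{p,q}(X)$ produces transplantation maps $\tau^{\sharp}\colon\ms{A}^{p,q}(X_2)\to\ms{A}^{p,q}(X_1)$ and $\sigma^{\sharp}\colon\ms{A}^{p,q}(X_1)\to\ms{A}^{p,q}(X_2)$, and by contravariant functoriality $\tau^{\sharp}\circ\sigma^{\sharp}$ and $\sigma^{\sharp}\circ\tau^{\sharp}$ are again multiplication by $N$; since $N\neq0$, $\tau^{\sharp}$ is a $\C$-linear isomorphism with two-sided inverse $\tfrac1N\sigma^{\sharp}$. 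Moreover, transplantation of invariants commutes with $G$-maps (Proposition~\ref{compatinv}), and in particular with $\Delta$, so $\tau^{\sharp}$ intertwines the Hodge Laplacians on $\ms{A}^{p,q}(X_2)$ and $\ms{A}^{p,q}(X_1)$.

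Finally I would conclude the spectral comparison. For each $\lambda\in\R$, the isomorphism $\tau^{\sharp}$ carries the $\lambda$-eigenspace of the Hodge Laplacian on $\ms{A}^{p,q}(X_2)$ into the $\lambda$-eigenspace on $\ms{A}^{p,q}(X_1)$, and $\tfrac1N\sigma^{\sharp}$ restricts to a two-sided inverse between these eigenspaces; hence they have equal (finite, by ellipticity together with compactness of the $X_i$) dimensions. Letting $\lambda$ range over the spectrum shows that the two Hodge Laplacians have the same eigenvalues with the same multiplicities, i.e.\ they are isospectral.

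I do not expect a serious obstacle: the algebraic transplantation of Section~5 was designed precisely for this sort of application. The only points requiring care are (i) invoking the K\"ahler hypothesis correctly, so that $\Delta$ both preserves type and is $G$-equivariant, and (ii) checking that the purely algebraic map $\tau^{\sharp}$---which by \eqref{transinvf} is a finite integer combination of pullbacks by elements of $G$---really does send smooth $\lambda$-eigenforms on $X_2$ to smooth $\lambda$-eigenforms on $X_1$; this is automatic, since eigenforms are smooth by elliptic regularity and pullback by a holomorphic isometry preserves smoothness, preserves $(p,q)$-type, and commutes with $\Delta$.
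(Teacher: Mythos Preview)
Your proposal is correct and follows essentially the same approach as the paper: identify $\ms{A}^{p,q}(X_i)$ with $\ms{A}^{p,q}(X)^{\Ga_i}$, observe that $\Delta$ is a $G$-module map on $\ms{A}^{p,q}(X)$ (holomorphicity preserves type, isometry commutes with $\Delta$), and apply Theorem~\ref{transinv} and Proposition~\ref{compatinv} to obtain an intertwining transplantation $\tau^{\sharp}$. You are in fact more explicit than the paper on one point: the paper's proof ends abruptly with ``$\tau^{\sharp}$ intertwines the Hodge Laplacians,'' whereas you spell out, via $\sigma^{\sharp}$ and the composite $N\cdot\Id$, why $\tau^{\sharp}$ is a linear isomorphism and hence restricts to a bijection of each $\lambda$-eigenspace---which is what actually yields equality of spectra with multiplicities.
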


\begin{proof} The group $G$ acts on the right by pullback on the complex-valued differential forms $\ms{A}^*(X)$.  The hypothesis that $G$ acts holomorphically on $X$ implies that $G$ preserves each of the subspaces $\ms{A}^{p,q}(X)$.  The hypothesis that $G$ acts by isometries on $X$ implies that the action of $G$ commutes with the Hodge Laplacian $\Delta: \ms{A}^{p,q}(X)\to \ms{A}^{p,q}(X)$, i.e., $\Delta$ is a $G$-module map.   For $i=1,2$, the $\Ga_i$-invariant forms are the forms on $X_i=\Ga_i\bs X$, i.e., $\ms{A}^{p,q}(X)^{\Ga_i}=\ms{A}^{p,q}(X_i)$.  By Theorem \ref{compatinv}, the transplantation of invariants $\tau^{\sharp}:\ms{A}^{p,q}(X_2)\to\ms{A}^{p,q}(X_1)$ given by Theorem \ref{transinv} intertwines the Hodge Laplacians on $\ms{A}^{p,q}(X_2)$ and $\ms{A}^{p,q}(X_1)$, thus proving Theorem~\ref{thm.pq'}.

\end{proof}

\section{Examples}\label{sec.examples}

We give examples of K\"ahler manifolds and group actions satisfying the hypotheses of Thereom~\ref{thm.pq'} and therefore also of Theorem~\ref{thm.mt'}.

\begin{exa}\label{Liz&Naveed}\cite{Brooks}
By Serre's Theorem \cite{Serre}, any finite group $G$ arises as the fundamental group of a compact K\"ahler manifold.  In particular, let $p$ be an odd prime, and let $X_0$ be a compact K\"ahler manifold whose fundamental group is the symmetric group $S_{p^3}$ of permutations of $p^3$ letters; let $X$ be the universal cover of $X_0$ with the pullback K\"ahler structure.  The fundamental group $S_{p^3}$ of $X_0$ acts by K\"ahler isometries on $X$ as deck transformations.  Let $E=\Z_p\times\Z_p\times\Z_p$ be the $p$-elementary abelian group of order $p^3$, and let 
$$H=\left\{\bmat{1&x&z\\0&1&y\\0&0&1}:x,y,z\in\Z_p\right\}$$
be the Heisenberg group over the $p$-element field $\Z_p$.  Each of $E$ and $H$ acts on itself by left translation, so admits a Cayley embedding in $S_{p^3}$; let $\Ga_1$ (respectively, $\Ga_2$) be the image of $E$ (respectively, $H$) under its Cayley embedding into $S_{p^3}$.  It is clear that $\Ga_1$ and $\Ga_2$ are almost conjugate subgroups of $S_{p^3}$: indeed, any bijection of $\Ga_1$ with $\Ga_2$ sending the identity to the identity will carry any element $\gamma$ of $\Ga_1$ to an element of $\Ga_2$ conjugate to $\gamma$ in $S_{p^3}$, since the two permutations have the same cycle structure.  Thus $\Ga_1$ and $\Ga_2$ are representation equivalent subgroups of $S_{p^3}$, so the orbit spaces $X_1=\Ga_1\bs X$ and $X_1=\Ga_1\bs X$ are isospectral K\"ahler manifolds.  They are not isometric, since their fundamental groups $\Ga_1$ and $\Ga_2$ are not isomorphic: $\Ga_1$ is abelian, while $\Ga_2$ is not.
\end{exa}

\begin{rem}
By adapting this construction as in \cite{Brooks} (see also \cite{ShamsStanhopeWebb}), one can construct arbitrarily large families of isospectral, mutually nonisometric compact K\"ahler manifolds.
\end{rem}

\begin{exa}\label{McReynolds}
In \cite{McReynolds}, D. B. McReynolds constructs arbitrarily large families of isospectral, mutually nonisometric Hermitian locally symmetric spaces, all of which, being quotients of Hermitian symmetric spaces (which are K\"ahler by Chapter VIII, Proposition 4.1 of \cite{Helgason}) by discrete subgroups acting by K\"ahler automorphisms, are therefore compact K\"ahler manifolds.  
\end{exa}

\begin{exa}\label{MiatelloPodesta1}
In \cite{MiatelloPodesta1}, R. Miatello and R. Podest\'a exhibit examples of pairs of isospectral but nonisometric Bieberbach K\"ahler manifolds.
\end{exa}

\section{The Lazzeri Jacobian}
Another natural generalization of the isogeny of Sunada isospectral Riemann surfaces arises in the context of the Lazzeri Jacobian.

\begin{dfn}
Let $M$ be a compact oriented Riemannian manifold of dimension $n=2m$ where $m$ is an odd integer.  The \emph{Lazzeri Jacobian} $J_L(M)$ is a complex torus defined as follows: $J_L(M)=H^m(M,\R)/H^m(M,\Z)$, where the complex structure is given by the Hodge $*$-operator determined by the metric and the orientation, which, because of the restriction on the dimension, satisfies $**=-\Id$ on the middle-dimensional forms $\ms{A}^m(M)$; the Hodge $*$-operator determines a complex structure on the middle-dimensional cohomology via the Hodge isomorphism of $H^m(M,\Z)$ with the space of harmonic $m$-forms.
\end{dfn}

\begin{rem} The Lazzeri Jacobian is a principally polarized abelian variety, with a polarization whose imaginary part is defined as $(\alpha,\beta)=-\int_M \alpha\wedge\beta$.  See \cite{Rubei} or \cite{BirkenhakeLangeCT} for more information about Lazzeri Jacobians.
\end{rem}

\begin{thm}
Let $M$ be a compact oriented Riemannian manifold of dimension $2m$ where $m$ is odd, and let $G$ be a finite group of orientation-preserving isometries of $M$.  Let $\Ga_1$ and $\Ga_2$ be subgroups of $G$ that act freely on $M$, and let $M_i=\Ga_i\bs M$ for $i=1,2$.  Suppose that the complex linear right representations $\C[\Ga_1 \bs G]$ and $\C[\Ga_2 \bs G]$ of $G$ afforded by the two right coset spaces are equivalent.  Then the Lazzeri Jacobians $J_L(M_1)$ and $J_L(M_2)$ are isogenous complex tori.
\end{thm}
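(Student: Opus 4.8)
The plan is to repeat, almost verbatim, the argument that proved \tref{thm.mt'}, replacing the odd cohomology and the Hodge filtration there by the middle cohomology $H^m$ and the Hodge-star complex structure. First note that since $m$ is odd and $\dim M=2m$, on $m$-forms we have $\ast\ast=(-1)^{m}\Id=-\Id$, so the Lazzeri complex structure is well defined. From the equivalence $\C[\Ga_1\bs G]\cong\C[\Ga_2\bs G]$, \lref{EquivC-repsDefQ} yields an equivalence $\Q[\Ga_1\bs G]\to\Q[\Ga_2\bs G]$ of rational permutation representations; clearing denominators in it and in an inverse equivalence produces right $\Z G$-module maps $\tau\colon\Z[\Ga_1\bs G]\to\Z[\Ga_2\bs G]$ and $\sigma\colon\Z[\Ga_2\bs G]\to\Z[\Ga_1\bs G]$ whose composites $\sigma\circ\tau$ and $\tau\circ\sigma$ are multiplication by nonzero integers.

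Next I would transplant invariants on middle-degree forms. The finite group $G$ acts on the right by pullback on the real $m$-forms $\ms{A}^m(M)$, and since its elements are \emph{orientation-preserving isometries}, this action commutes both with the Hodge star $\ast$ (which is determined by the metric and the orientation) and with the Hodge Laplacian $\Delta$; hence $\ast$ and $\Delta$ are $G$-module endomorphisms of $\ms{A}^m(M)$. For $i=1,2$ one has $\ms{A}^m(M)^{\Ga_i}=\ms{A}^m(M_i)$, and $\ast$, $\Delta$ restrict to the corresponding operators for the quotient metric and orientation on $M_i$. By \tref{transinv} and \pref{compatinv}, the transplantation of invariants $\tau^{\sharp}\colon\ms{A}^m(M_2)\to\ms{A}^m(M_1)$ commutes with $d$, with $\ast$, and with $\Delta$. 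Commuting with $\Delta$, it preserves the space of harmonic $m$-forms; via the Hodge isomorphism $\mathcal{H}^m(M_i)\cong H^m(M_i,\R)$ this descends to an $\R$-linear map $\tau^{\sharp}\colon H^m(M_2,\R)\to H^m(M_1,\R)$, and since $\tau^{\sharp}$ commutes with $\ast$ on harmonic forms, this map is $\C$-linear for the Lazzeri complex structures. Using $\sigma$ and contravariant functoriality, $\tau^{\sharp}\circ\sigma^{\sharp}$ and $\sigma^{\sharp}\circ\tau^{\sharp}$ are multiplication by nonzero integers, so $\tau^{\sharp}$ is a $\C$-linear isomorphism.

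I would then treat the lattices via transplantation of coinvariants, as in Subsection~\ref{transsingchains}: $\tau$ induces $\tau_{\sharp}\colon C_m(M_1)\to C_m(M_2)$ on smooth singular chains (\tref{transcoinv}), commuting with the boundary maps (\pref{compatcoinv}), hence a map $\tau_{\sharp}\colon H_m(M_1)\to H_m(M_2)$ on integral homology that, by functoriality in $\sigma$, is a rational isomorphism of free finite-rank $\Z$-modules, and so is injective with finite cokernel; dualizing, $(\tau_{\sharp})^*\colon\Hom(H_m(M_2),\Z)\to\Hom(H_m(M_1),\Z)$ is injective with finite cokernel. Now apply \lref{pairing} (whose proof applies verbatim with $\C$ replaced by $\R$) to the integration pairing $\ms{A}^m(M)\times C_m(M)\to\R$ of \eref{integration}: the square
$$\xymatrix{
{H^m(M_2,\R)}\ar_{\tau^{\sharp}}[d]\ar^{\flat_{\Ga_2}}_{\sim}[r]&{\Hom(H_m(M_2),\R)}\ar^{(\tau_{\sharp})^*}[d]\\
{H^m(M_1,\R)}\ar_{\flat_{\Ga_1}}^{\sim}[r]&{\Hom(H_m(M_1),\R)}
}$$
commutes, the horizontal maps being the nondegenerate Kronecker pairings, which carry $H^m(M_i,\Z)$ onto $\Hom(H_m(M_i),\Z)$. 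Hence $\tau^{\sharp}$ maps the lattice $H^m(M_2,\Z)$ into $H^m(M_1,\Z)$ with image of finite index, equal to the (finite) cokernel of $(\tau_{\sharp})^*$.

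Finally, $\tau^{\sharp}\colon H^m(M_2,\R)\to H^m(M_1,\R)$ is a $\C$-linear isomorphism carrying the lattice $H^m(M_2,\Z)$ onto a finite-index sublattice of $H^m(M_1,\Z)$, so it descends to a surjective holomorphic homomorphism $\hat\tau\colon J_L(M_2)\to J_L(M_1)$; the Snake Lemma applied to the two rows $0\to H^m(M_i,\Z)\to H^m(M_i,\R)\to J_L(M_i)\to 0$ with vertical maps $(\tau_{\sharp})^*$, $\tau^{\sharp}$, $\hat\tau$ identifies $\ker\hat\tau$ with the finite cokernel of $(\tau_{\sharp})^*$, so $\hat\tau$ is an isogeny, and by \dref{isogeny} $J_L(M_1)$ and $J_L(M_2)$ are isogenous. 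The only point that genuinely differs from \tref{thm.mt'} — and the step I expect to require the most care — is the compatibility of transplantation with the Lazzeri complex structure: there is no Hodge filtration on de Rham cohomology to invoke as in \pref{FiltrationCompat}, so one must pass to harmonic representatives and use the $G$-equivariance of $\ast$, which is exactly where the hypotheses that the elements of $G$ be orientation-preserving isometries are used.
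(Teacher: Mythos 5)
Your proposal is correct and takes essentially the same approach as the paper's own proof: reduce to $\Z G$-module maps via Lemma~\ref{EquivC-repsDefQ}, transplant invariants to get $\tau^{\sharp}$ on $H^m(\cdot,\R)$ and coinvariants to get $\tau_{\sharp}$ on $H_m(\cdot)$, invoke the pairing lemma to match the lattices, and conclude with the Snake Lemma. The only difference is that you spell out the passage to harmonic representatives to justify $\C$-linearity of $\tau^{\sharp}$, whereas the paper states this more tersely, but the underlying observation --- that orientation-preserving isometries preserve $\ast$, hence the Lazzeri complex structure --- is identical.
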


\begin{proof}
The proof is virtually the same as that of Theorem \ref{thm.mt'}.  The equivalence of the two complex representations that were defined over the rationals implies equivalence as $\Q G$-modules, by Lemma \ref{EquivC-repsDefQ}.  Clearing denominators as before yields maps of $\Z G$-modules $\tau:\Z[\Ga_1\bs G]\to\Z[\Ga_2\bs G]$ and $\sigma:\Z[\Ga_2\bs G]\to\Z[\Ga_1\bs G]$ whose composites are multiplication by an integer.  Then $\tau$ induces (via transplantation of invariants) a cohomology transplantation $\tau^{\sharp}:H^m(M_2,\R)\to H^m(M_1,\R)$ and (via transplantation of coinvariants) a transplantation $\tau_{\sharp}:H_m(M_1)\to H_m(M_2)$ of singular chains, as before.  The diagram
$$\xymatrix@C=8pt{
{H^m(M_2,\Z)}\ar@{=}[r]&{\Hom(H_m(M_2),\Z)}\ar_{(\tau_{\sharp})^*}[d]\ar[rr]&&{\Hom(H_m(M_2),\R)}\ar_{(\tau_{\sharp})^*}[d]\ar^{\qquad\flat_2^{-1}}[rr]&&{H^m(M_2,\R)}\ar^{\tau^{\sharp}}[d]\\
{H^m(M_1,\Z)}\ar@{=}[r]&{\Hom(H_m(M_1),\Z)}\ar[rr]&&{\Hom(H_m(M_2),\R)}\ar_{\qquad\flat_1^{-1}}[rr]&&{H^m(M_1,\R)}
}$$
commutes, by the pairing lemma.  The transplantation $\tau^{\sharp}:H^m(M_2,\R)\to H^m(M_1,\R)$ is complex linear by Lemma \ref{compatinv}, since $G$ acts by orientation-preserving isometries and hence preserves the Hodge $*$-operator, which defines the complex structure on the cohomology.  The remainder of the proof is as before, using the commutative diagram
$$\xymatrix{
{0}\ar[r]&{H^m(M_2,\Z)}\ar[r]\ar^{(\tau_{\sharp})^*}[d]&{H^m(M_2,\R)}\ar^{\tau^{\sharp}}[d]\ar[r]&{J_L(M_2)}\ar^{\hat{\tau}}@{-->}[d]\ar[r]&{0}\\
{0}\ar[r]&{H^m(M_1,\Z)}\ar[r]&{H^m(M_1,\R)}\ar[r]&{J_L(M_1)}\ar[r]&{0}
}$$
and the fact that the left vertical arrow is a rational isomorphism, hence an injection with finite cokernel, to induce the desired isogeny $\hat{\tau}$ of Lazzeri Jacobians.
\end{proof}


\end{document}